\documentclass[12pt]{article}
\usepackage{epsfig,wrapfig,epic}
\usepackage{amscd}
\usepackage{rotating}
\usepackage{wrapfig}

\usepackage{hyperref}
\usepackage{amsfonts}
\usepackage{mathrsfs}
\usepackage{stmaryrd}
\usepackage{shadow}

% LaTex trick to prevent line break in the middle of a math expression
%\relpenalty=10000
%\binoppenalty=10000

\usepackage{dsfont}
\usepackage{graphics}
\usepackage{graphicx}
\usepackage{amssymb}
\usepackage{amsmath}
\usepackage{bbm}
\DeclareMathAlphabet{\mathpzc}{OT1}{pzc}{m}{it}

\newtheorem{example}{Example}%[section]

\newcommand{\C}{\mathbbm{C}}
\newcommand{\bdx}{\mathbf{x}}
\newcommand{\bdy}{\mathbf{y}}
\newcommand{\bdz}{\mathbf{z}}
\newcommand{\bdo}{\mathbf{0}}
\newcommand{\h}{{{\mbox{\tiny $\mathsf{H}$}}}}
\newcommand{\cK}{\mathpzc{Kernel}}
\newcommand{\cR}{\mathpzc{Range}}
\newcommand{\eig}{\mathpzc{eig}}
\newcommand{\la}{\lambda}
\newcommand{\sg}{\sigma}
\newcommand{\al}{\alpha}
\newcommand{\bdu}{\mathbf{u}}
\newcommand{\bdg}{\mathbf{g}}
\newcommand{\pd}[2]{\frac{\partial #1}{\partial #2}}
\newcommand{\cC}{{\cal C}}
\newcommand{\cE}{{\cal E}}
\newcommand{\spn}{\mathpzc{span}}
\newcommand{\eps}{\varepsilon}
\newcommand{\blb}{\big[\,}
\newcommand{\brb}{\, \big]}
\newcommand{\QED}{${~} $ \hfill \raisebox{-0.3ex}{\large $\Box$}}

\newtheorem{lemma}{Lemma}
\newtheorem{theorem}{Theorem}

\title{Sensitivity and Computation of a Defective Eigenvalue
}

\author{Zhonggang Zeng\thanks{Department of
Mathematics, Northeastern Illinois University, Chicago, IL 60625.
(zzeng@neiu.edu).}}

\topmargin-0.4in
\textheight9.0in
\oddsidemargin4mm
\evensidemargin4mm
\textwidth6.2in
\parskip4mm
\parindent0mm

\begin{document}

\input amssym.def
\maketitle

\begin{abstract}
~A defective eigenvalues is well documented to be hypersensitive 
to data perturbations and round-off errors, making it a formidable challenge 
in numerical computation particularly when the matrix is known through
approximate data.
~This paper establishes a finitely bounded sensitivity of a defective 
eigenvalue with respect to perturbations that preserve
the geometric multiplicity and the smallest Jordan block size.
~Based on this perturbation theory, numerical computation of a defective 
eigenvalue is regularized as a well-posed least squares problem so that
it can be accurately carried out using floating point arithmetic 
even if the matrix is perturbed. 
\end{abstract}

%\begin{keywords} defective eigenvalue, sensitivity, 
%regularization
%\end{keywords}

%\begin{AMS}
%65F15, 65F22
%\end{AMS}

\vspace{-4mm}
\section{Introduction}

\vspace{-4mm}
Computing matrix eigenvalues is one of the fundamental problems in theoretical 
and numerical linear algebra.
~Remarkable advancement has been achieved since the advent of 
the Francis QR algorithm in 1960s. 
~However, it is well documented that multiple and defective eigenvalues are 
hypersensitive to both data perturbations and the 
inevitable round-off.
~For an eigenvalue of a matrix ~$A$ ~associated with the largest
Jordan block size ~$l\times l$ ~while ~$A$ ~is perturbed by ~$\Delta A$, ~the
error bound \cite[p. 58]{chatelin-fraysse}\cite{chatelin-86,Lidskii}
on the eigenvalue deviation is proportional to
~$\|\Delta A\|_2^{1/l}$, ~implying that the accuracy of the computed
eigenvalue in number of digits is a fraction ~$\frac{1}{l}$
~of the accuracy of the matrix data.
~As a result, numerical computation of defective eigenvalues 
remains a formidable challenge.

On the other hand, it has been known that a defective eigenvalue 
disperses into a cluster when the matrix is under 
arbitray perturbations but the mean of the cluster is not hypersensitive
\cite{kato66,ruhe70b}.
~In his seminal technical report\cite{Kahan72}, Kahan proved that the 
sensitivity of an \,$m$-fold eigenvalue is actually bounded by 
~$\frac{1}{m}\|P\|_2$ ~where ~$P$ ~is the spectral projector associated with
the eigenvalue as long as the perturbation is constrained to preserve the
algebraic multiplicity.
~The same proof and the same sensitivity also apply to the mean of the 
eigenvalue cluster emanating from the \,$m$-fold eigenvalue with respect to
perturbations.
~Indeed, using cluster means as approximations to defective eigenvalues
has been extensively applied to numerical computation of Jordan Canonical Forms
and staircase forms, provided that the clusters can be sorted out from the
spectrum.
~This approach includes works of Ruhe \cite{ruhe-70-bit}, 
Sdridhar and Jordan \cite{sri-jor},
and culminated in Golub and Wilkinson's review
\cite{golub-wilkinson} as well as K{\aa}gstr\"{o}m and Ruhe's
{\sc JNF} \cite{kagstrom-ruhe-jnf,kagstrom-ruhe}.
~Theoretical issues have been analyzed in, e.g.
works of Demmel \cite{Dem83,demmel-86} and Wilkinson \cite{wilk-84,wilk-86}.
~Perturbations on eigenvalue clusters are also studied as pseudospectra 
of matrices in works of Trefethon and Embree \cite{tref-emb} as well
as Rump \cite{Rump01,Rump06}.

In this paper we elaborate a different measurement of the sensitivity of a 
defective eigenvalue with respect to perturbations constrained to 
preserve the geometric multiplicity and the smallest Jordan block size. 
~We prove that such sensitivity is also finitely bounded even if the 
multiplicity is not preserved, and it is large only if either the geometric 
multiplicity or the smallest Jordan block size can be increased by a small 
perturbation on the matrix.
~This sensitivity can be small even if the spectral projector norm is large, 
or vice versa.

In computation, perturbations are expected to be arbitrary without preserving 
either the multiplicity or what we refer to as the multiplicity support. 
~We prove that a certain type of pseudo-eigenvalue uniquely exists, is 
Lipschitz 
continuous, is backward accurate and approximates the defective eigenvalue 
with forward accuracy in the same order of the data accuracy,
~making it a well-posed problem for computing a defective eigenvalue via 
solving a least squares problem.
~Based on this analysis, we develop an iterative algorithm {\sc PseudoEig}%
\footnote{A permanent website
~{\tt homepages.neiu.edu/$\sim$zzeng/pseudoeig.html} ~is set up 
to provide Matlab source codes and other resources for 
Algorithm {\sc PseudoEig}.}
that is capable of accurate computation of defective eigenvalues using 
floating point arithmetic
from empirical matrix data even if the spectral projector norm is large and
thus the cluster mean is inaccurate.

\vspace{-4mm}
\section{Notation}

\vspace{-4mm}
The space of dimension ~$n$ ~vectors is ~$\C^n$ ~and the space of
~$m\times n$ ~matrices is ~$\C^{m\times n}$.
~Matrices are denoted by upper case letters ~$A$, ~$X$, ~and ~$G$, ~etc, with 
~$O$ ~representing a zero matrix whose dimensions can be derived from the 
context.
~Boldface lower case letters such as ~$\bdx$ ~and ~$\bdy$ ~represent vectors.
~Particularly, the zero vector in ~$\C^n$ ~is denoted by
~$\bdo_n$ ~or simply ~$\bdo$ ~if the dimension is clear.
~The conjugate transpose of a matrix or vector ~$(\cdot)$ ~is denoted by
~$(\cdot)^\h$, ~and the Moore-Penrose inverse of a matrix ~$(\cdot)$ 
~is ~$(\cdot)^\dagger$.
~The submatrix formed by entries in rows ~$i_1,\ldots,i_2$ ~and columns
~$j_1,\ldots,j_2$ ~of a matrix ~$A$ ~is denoted by ~$A_{i_1:i_2,j_1:j_2}$.
~The kernel and range of a matrix ~$(\cdot)$ ~are denoted by ~$\cK(\cdot)$
~and ~$\cR(\cdot)$ ~respectively.
~The notation ~$\eig(\cdot)$ ~represents the spectrum of a matrix ~$(\cdot)$.

We also consider vectors in product spaces such as ~$\C\times\C^{m\times k}$.
~In such cases, the vector 2-norm is the square root of the sum of squares of 
all components.
~For instance, a vector ~$(\la,X)\in\C\times\C^{n\times k}$ ~can be arranged 
as a column vector ~$\bdu$ ~in ~$\C^{n\,k+1}$ ~and 
~$\|(\la,X)\|_2 = \|\bdu\|_2$ ~regardless of the ordering.
~A zero vector in such a vector space is also denoted by ~$\bdo$.

Let ~$\la_*$ ~be an eigenvalue of a matrix ~$A\in\C^{n\times n}$.
~Its algebraic multiplicity can be partitioned into a non-increasing sequence 
~$\{l_1,\, l_2,\,\dots\}$ 
~of integers called the {\em Segre characteristic}~\cite{dem-edel} 
that are the sizes of elementary Jordan blocks,
and there is a matrix ~$X_*\in\C^{n\times m}$ ~such that
\[ A\,X_* ~=~ X_*\,\left[\begin{array}{ccc}
J_{l_1}(\la_*) & & \\ & J_{l_2}(\la_*) & \\ & & \ddots 
\end{array}\right] ~~\mbox{where}~~
 J_k(\la_*) ~=~ \mbox{\footnotesize $\left[\begin{array}{cccc}
\la_* & 1 & & \\ & \la_* & \ddots & \\ & & \ddots & 1 \\ & & & \la_*
\end{array}\right]_{k\times k}$}.
\]
For convenience, a Segre characteristic is infinite in formality and
the number of nonzero entries is the geometric multiplicity. 
~The last nonzero component of a Segre characteristic, namely the size of the
smallest Jordan block associated with ~$\la_*$, ~is of particular 
importance in our analysis and we shall call it the {\em Segre
characteristic anchor} ~or simply {\em Segre anchor}.

\begin{wrapfigure}{r}{3.3in}
\vspace{-10mm}
\begin{center}
\epsfig{figure=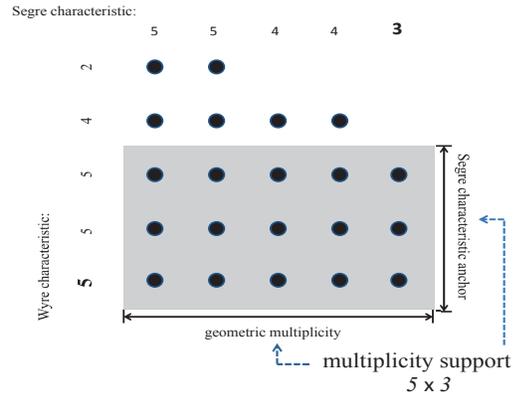,width=3.2in,height=2.1in}
\end{center} \vspace{-6mm}
\caption{Illustration of a ~$5\times 3$ ~multiplicity support.}
\label{f:eigdim} 
\vspace{-7mm}
\end{wrapfigure}
%\begin{figure}[hbt]
%\begin{center}
%\epsfig{figure=eigdim.eps,width=3.8in,height=2.6in}
%\end{center}
%\caption{Illustration of the multiplicity support for a defective eigenvalue}
%\end{figure}
For instance, if ~$\la_*$ ~is an eigenvalue of ~$A$ ~associated with
elementary Jordan blocks 
~$J_5(\la_*),\, J_5(\la_*),\, J_4(\la_*),\, J_4(\la_*)$ ~and
~$J_3(\la_*)$, ~its Segre characteristic is ~$\{5,5,4,4,3,0,\ldots\}$ ~with
a Segre anchor ~$3$. 
~The geometric multiplicity is ~$5$.
~A Segre characteristic along with its conjugate that is called the
Weyr characteristic can be illustrated by a Ferrer's diagram~\cite{dem-edel}
in Fig.~\ref{f:eigdim}, where the geometric multiplicity and the Segre 
anchor represent the dimensions of the base rectangle occupied by 
the equal leading entries of the Weyr characteristic.

For a matrix ~$A$, ~we shall say the {\em multiplicity support}\,
of its eigenvalue ~$\la_*$ ~is ~$m\times k$ ~if the geometric multiplicity of 
~$\la_*$ ~is ~$m$ ~and the Segre anchor is ~$k$. 
%
%~We shall also call ~$m\times k$ ~one of the multiplicity supports of ~$A$.
%
~In this case, there is a unique ~$X_*\in\C^{n\times k}$ ~satisfying the
equations
\begin{eqnarray*}   (A-\la_* I)\,X_* & ~~=~~& X_*\,J_k(0) \\
                             C^\h\,X_* & = & T
\end{eqnarray*}
with proper choices of matrix parameters ~$C\in\C^{n\times m}$ ~and 
\begin{equation}\label{jbitT} T ~~=~~
\left[\begin{array}{ll} 1 & \bdo_{k-1}^\top \\ 
\bdo_{m-1} & O_{(m-1)\times (k-1)} \end{array}\right] ~~\in~~\C^{m\times k}.
\end{equation}
as we shall prove in Lemma~\ref{l:jbit}.
~Here ~$J_k(0)$ ~is a nilpotent upper-triangular matrix of rank ~$k-1$ ~and
can be replaced with any matrix of such kind.
~For integers ~$m,k\le n$, ~we define a holomorphic mapping 
\begin{equation}\label{jbitf}
\begin{array}{ccrcl}
\bdg & ~~:~~ &  \C^{n\times n}\times \C\times\C^{n\times k} & 
~~\longrightarrow~~ & \C^{n\times k}\times \C^{m\times k} \\
& & (G,\la,X) & \longmapsto & \left( \begin{array}{c} (G-\la I)\,X - X\,S \\
C^\h\, X - T  \end{array} \right)
\end{array} 
\end{equation}
that depends on parameters ~$C\in\C^{n\times m}$ ~and
an upper-triangular nilpotent matrix 
\begin{equation}\label{S}
 S ~~=~~ \mbox{\small $\left[\begin{array}{cccc} 0 & s_{12} & \cdots & s_{1k} \\
\vdots & \ddots & \ddots & \vdots \\
\vdots & & \ddots & s_{k-1,k} \\ 0 & \cdots & \cdots & 0 
\end{array}\right]$} ~~~~\mbox{with}~~~~
s_{12}s_{23}\cdots s_{k-1,k} \ne 0
\end{equation}
of rank ~$k-1$. 
~We shall denote the Jacobian and partial Jacobian
\begin{eqnarray*}  
\bdg_{_{G \la X}}(G_0,\la_0,X_0) &~~=~~ &
\left.\pd{\bdg(G,\la,X)}{(G,\la,X)}\right|_{(G,\la,X) = (G_0,\la_0,X_0)} \\
\bdg_{_{\la X}}(G_0,\la_0,X_0) &~~=~~ &
\left.\pd{\bdg(G_0,\la,X)}{(\la,X)}\right|_{(\la,X) = (\la_0,X_0)} 
\end{eqnarray*}
at particular ~$G_0$, ~$\la_0$ ~and ~$X_0$
~that can be considered linear transformations
\begin{equation}\label{gGlaX}
\begin{array}{rcl}
\bdg_{_{G \la X}}(G_0,\la_0,X_0) ~~: ~~~~~~& & \\  
\C^{n\times n}\times \C\times\C^{n\times k} & 
\longrightarrow & \C^{n\times k}\times \C^{m\times k} \\
(G,\la,X) & \longmapsto & \left( \begin{array}{c} 
(G-\la I)\,X_0 +(G_0-\la_0 I)\,X- X\,S 
\\
C^\h\, X  \end{array} \right) 
\end{array} 
\end{equation}
and 
\begin{equation}\label{glaX}
\begin{array}{ccrcl}
\bdg_{_{\la X}}(G_0,\la_0,X_0) & : &  
\C\times\C^{n\times k} & 
\longrightarrow & \C^{n\times k}\times \C^{m\times k} \\
& & (\la,X) & \longmapsto & \left( \begin{array}{c} 
-\la\,X_0 +(G_0-\la_0 I)\,X- X\,S \\
C^\h\, X  \end{array} \right)
\end{array} 
%\begin{array}{ccrcl}
%\bdg_{_{\la X}}(G_0,\la_0,X_0) & : &  
%\C\times\C^{n\times k} & 
%\longrightarrow & \C^{n\times k}\times \C^{m\times k} \\
%& & (\la,X) & \longmapsto & \left( \begin{array}{c} 
%-\la\,X_0 +(G_0-\la_0 I)\,X- X\,S \\
%C^\h\, X  \end{array} \right)
%\end{array} 
\end{equation}
respectively.
~The actual matrices representing the Jacobians depend on the ordering of 
the bases for the domains and codomains of those linear transformations.
~The Moore-Penrose inverse of a linear transformation such as 
~$\bdg_{_{\la X}}(G_0,\la_0,X_0)^\dagger$ ~is the linear transformation 
whose matrix representation is the Moore-Penrose inverse matrix
of the matrix representation for ~$\bdg_{_{\la X}}(G_0,\la_0,X_0)$ 
~corresponding to the same bases.

\vspace{-4mm}
\section{Properties of the multiplicity support}

\vspace{-4mm}
The following lemma asserts a basic property of the multiplicity support.

\begin{lemma}\label{l:ms}
	~Let ~$A\in\C^{n\times n}$ ~with ~$\la_*\in\eig(A)$
~of multiplicity support ~$m\times k$. 
~Then
\begin{equation}\label{kjran}
\cK\big((A-\la_* I)^{j} \big) ~\subset~\cR(A-\la_* I)
~~~~\mbox{for}~~~~ j = 1, 2, \ldots, k-1.
\end{equation}
Furthermore, there is an open and dense subset ~$\cC$ ~of ~$\C^{n\times m}$
~such that, for every ~$C\,\in\,\cC$,
~the solution ~$\bdx_*$ ~of the equation
\begin{equation}\label{chx10} 
 C^\h\,\bdx ~~=~~ \mbox{\scriptsize 
$\left[\begin{array}{c} 1 \\ \bdo \end{array}\right]$}
~~~~\mbox{for}~~~ \bdx \,\in\, \cK(A-\la_* I)
\end{equation}
uniquely exists and satisfies 
~$\bdx_*  \,\in\,
\mbox{$\Big(\bigcap_{j=1}^{k-1} \cR\big((A-\la_* I)^j\big) \Big)$}
~\setminus~\cR\big((A-\la_* I)^{k}\big)$.
\end{lemma}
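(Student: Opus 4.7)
The plan is to reduce everything to the Jordan canonical form of $A$ restricted to the generalized eigenspace of $\la_*$, where the kernels and ranges of the powers of $A-\la_* I$ decouple across elementary Jordan blocks. Let $J_{l_1}(\la_*),\dots,J_{l_m}(\la_*)$ be the Jordan blocks at $\la_*$ ordered so that $l_1\ge\cdots\ge l_m=k$ (the Segre anchor). On the generalized eigenspaces associated with the other eigenvalues the operator $A-\la_* I$ is invertible, so those subspaces sit entirely inside $\cR(A-\la_* I)$ and contribute nothing to $\cK((A-\la_* I)^j)$; every claim in the lemma thus reduces to a statement about the $\la_*$-blocks.

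I would first handle the inclusion~(\ref{kjran}) block by block: in the standard coordinates of $J_{l_i}(\la_*)$ one has $\cK(J_{l_i}(0)^j)=\spn(e_1,\dots,e_j)$ and $\cR(J_{l_i}(0))=\spn(e_1,\dots,e_{l_i-1})$, and the hypothesis $j\le k-1\le l_i-1$ gives the containment in every block. The same block inspection yields $\cK(A-\la_* I)\subset\cR((A-\la_* I)^{k-1})$, because $e_1\in\spn(e_1,\dots,e_{l_i-k+1})=\cR(J_{l_i}(0)^{k-1})$ whenever $l_i\ge k$; by nesting of the range chain this automatically places $\cK(A-\la_* I)$ inside every $\cR((A-\la_* I)^j)$ for $j=1,\dots,k-1$, so the required intersection reduces to $\cR((A-\la_* I)^{k-1})$.

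Next I would compute $W_0:=\cK(A-\la_* I)\cap \cR((A-\la_* I)^k)$ block-wise: the intersection $\spn(e_1)\cap\spn(e_1,\dots,e_{l_i-k})$ is nonzero precisely when $l_i\ge k+1$, so $\dim W_0$ equals the number $m'$ of Jordan blocks at $\la_*$ of size strictly larger than $k$. The anchor hypothesis $l_m=k$ forces $m'<m$, making $W_0$ a proper subspace of $W:=\cK(A-\la_* I)$. For the genericity assertion I would fix a basis $v_1,\dots,v_m$ of $W$ in which $v_{m-m'+1},\dots,v_m$ span $W_0$, set $V=[\,v_1\;\cdots\;v_m\,]\in\C^{n\times m}$, and rewrite the unknown in~(\ref{chx10}) as $\bdx=V\bdy$. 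The equation becomes $(C^\h V)\,\bdy=[1,\bdo^\top]^\top$, uniquely solvable iff $\det(C^\h V)\ne 0$; the further requirement $\bdx_*\notin \cR((A-\la_* I)^k)$ is equivalent to $\bdx_*\notin W_0$, which translates to $((C^\h V)^{-1})_{i,1}\ne 0$ for some $i\le m-m'$ and, by Cramer's rule, to a non-vanishing minor condition on $C^\h V$. Both are polynomial inequalities in the entries of $C$; they are simultaneously satisfied by the explicit choice $C=V(V^\h V)^{-1}$, which gives $C^\h V=I_m$, $\bdy=e_1$, and $\bdx_*=v_1\notin W_0$ since $1\le m-m'$. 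Hence the admissible set $\cC\subset\C^{n\times m}$ is a nonempty Zariski-open set, therefore open and dense.

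The step I expect to require the most care is the last one: correctly ordering the basis $v_1,\dots,v_m$ so that the geometric condition ``$\bdx_*$ has a component outside $W_0$'' becomes a clean non-vanishing polynomial condition on the entries of $C$, and producing an explicit witness $C$ that meets both Zariski-open conditions simultaneously, certifying that their intersection is nonempty (and hence dense). The Jordan-block computations in the first three paragraphs are essentially bookkeeping and should be routine once the indexing by the Segre characteristic is set up.
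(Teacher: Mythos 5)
Your proof is correct, and while the structural half (the Jordan-block bookkeeping establishing (\ref{kjran}), the containment $\cK(A-\la_* I)\subset\bigcap_{j=1}^{k-1}\cR\big((A-\la_* I)^j\big)$, and the fact that $W_0=\cK(A-\la_* I)\cap\cR\big((A-\la_* I)^k\big)$ is a proper subspace of the kernel) is essentially the same as the paper's argument via generalized eigenvector chains, your treatment of the genericity claim is genuinely different. The paper proves openness of $\cC$ by continuity and proves density by two explicit hands-on perturbations: first perturbing $C$ to $C-\eps N$ to make $C^\h N$ invertible, then constructing a rank-one correction $D^{(\eps)}$ that steers the solution $\bdx_*$ off the bad subspace ${\cal K}$ while keeping the right-hand side fixed. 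You instead observe that both requirements --- $\det(C^\h V)\ne 0$ and the non-vanishing of at least one of the cofactors $((C^\h V)^{-1})_{i,1}$, $i\le m-m'$ --- cut out a Zariski-open set, and you certify nonemptiness with the single witness $C=V(V^\h V)^{-1}$, for which $C^\h V=I_m$ and $\bdx_*=v_1\notin W_0$; density and openness then follow from the standard fact that a nonempty Zariski-open subset of $\C^{n\times m}$ is open and dense in the Euclidean topology. Your route is shorter and avoids the $\eps$-bookkeeping; the paper's is more elementary and self-contained. The only point worth flagging is cosmetic: since $C^\h$ involves conjugation, your defining functions are polynomials in the entries of $\overline{C}$ rather than of $C$, but the zero set of a nonzero such polynomial still has empty interior and measure zero, so the open-dense conclusion is unaffected.
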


\begin{proof}
From the multiplicity support of ~$\la_*$,
~there are ~$m$ ~Jordan blocks of sizes
~$\ell_1 \ge \cdots \ge \ell_m$ ~respectively with ~$\ell_m=k$
~along with ~$m$ ~sequences of generalized eigenvectors 
~$\big\{\bdx_1^{(i)},\bdx_2^{(i)},\ldots,\bdx_{\ell_i}^{(i)}\big\}_{i=1}^m$
~such that ~$(A-\la_* I)\bdx_{j+1}^{(i)} = \bdx_j^{(i)}$ 
~for ~$i=1,\ldots,m$ ~and ~$j=1,\ldots,\ell_i-1$.
~Moreover, ~$\cK\big((A-\la_* I)^j\big)$ ~is spanned by ~$\bdx_l^{(i)}$
~for ~$1\le l\le j$ ~and ~$1\le i\le m$.
~Thus (\ref{kjran}) holds.
~Furthermore
~$(A-\la_* I)^j\,\bdx_{j+1}^{(i)} \,=\,\bdx_1^{(i)}$ 
~for ~$j=1,\ldots,\ell_i-1$ ~and ~$i=1,\ldots,m$. 
~Namely every ~$\bdz\in\cK(A-\la_* I)$ ~is in 
~$\bigcap_{j=1}^{k-1} \cR\big((A-\la_* I)^j\big)$ ~since ~$\ell_i\ge k$
~for all ~$i$.
~However, ~$\bdx_1^{(m)} \not\in \cR\big((A-\la_* I)^k\big)$ ~since 
\[ (A-\la_* I)^k\, \big[\bdx_1^{(m)},\cdots,\bdx_{\ell_m}^{(m)}\big] ~~=~~ 
\big[\bdx_1^{(m)},\cdots,\bdx_{\ell_m}^{(m)}\big]\, J_k(0)^k ~~=~~ O
\]
and ~$\C^n$ ~is the direct sum of those invariant subspaces,
~implying at least one vector in the basis of
~$\cK(A-\la_* I)$ ~is not in ~$\cR\big((A-\la_* I)^k\big)$ ~so
the dimension of the subspace 
~${\cal K} = \cK(A-\la_* I)\cap\cR\big((A-\la_* I)^k\big)$ 
~is less than ~$m$. 

Let columns of ~$N\,\in\,\C^{n\times m}$ ~form an orthonormal basis for
~$\cK(A-\la_* I)$ ~and denote ~$\cC_0 \,=\,\big\{ C\in\C^{n\times m} ~\big|~
(C^\h N)^{-1} ~\mbox{exists}\big\}$,
~which is open since ~$(C+\Delta C)^\h N$ ~is invertible
if ~$C^\h N$ ~is invertible and ~$\|\Delta C\|_2$ ~is sufficiently small.
~For any ~$C\,\not\in\,\cC_0$ ~so that ~$C^\h N$ ~is rank-deficient, 
~we have ~$(C-\eps\,N)^\h N \,=\, C^\h N-\eps\,I$ ~is invertible for
all ~$\eps\,\not\in\,\eig(C^\h N)$ ~so ~$C-\eps\,N\,\in\,\cC_0$ ~and
~$\cC_0$ ~is thus dense.
~For every ~$C\,\in\,\cC_0$ 
~the equation (\ref{chx10}) then has a unique solution 
\[ \bdx_* ~~=~~
N\,(C^\h\,N)^{-1}\,\mbox{\scriptsize 
$\left[\begin{array}{c} 1\\ \bdo\end{array}\right]$}
\]
Let ~$\cC\,\subset\,\cC_0$ ~such that
the ~$\bdx_*\,\not\in\,{\cal K}$ ~for every ~$C\,\in\,\cC$.
~Clearly ~$\cC$ ~is open since, for every ~$C\,\in\,\cC$, ~we have
~$\hat\bdx\,=\,
N\,\big((C+\Delta C)^\h\,N\big)^{-1}\,\mbox{\scriptsize
$\left[\begin{array}{c} 1\\ \bdo\end{array}\right]$}\,\not\in\,{\cal K}$
~for small ~$\|\Delta C\|_2$ ~and thus ~$C+\Delta C\,\in\,\cC$.
~To show ~$\cC$ ~is dense in ~$\cC_0$, ~let ~$C\,\in\,\cC_0$ ~with
the corresponding ~$\bdx_*\,\in\,{\cal K}$.
~Since ~$\dim({\cal K})<m$, ~there is a unit vector 
~$\hat\bdx\,\in\,\cK(A-\la_* I)\,\setminus\,{\cal K}$.
~For any ~$\eps\ge 0$, ~let
~$D^{(\eps)} \,=\,
-\frac{\bdx_*+\eps\,\hat\bdx}{\|\bdx_*+\eps\,\hat\bdx\|_2^2}
\,\hat\bdx^\h\,C$.
~There is a ~$\mu>0$ ~such that
~$\big\|D^{(\eps)}\big\|_2 \,\le\, \mu$ ~for all ~$\eps\,\in\,[0,1]$ 
~since ~$\min_{\eps\in [0,1]}\|\bdx_*+\eps\,\hat\bdx\|_2 > 0$.
~Then 
\begin{equation*} \big(C+\eps\,D^{(\eps)}\big)^\h (\bdx_*+\eps\,\hat\bdx) \,=\,
C^\h\,\bdx_* + \eps\,C^\h\hat\bdx - \eps\,C^\h\,\hat\bdx\,
\frac{(\bdx_*+\eps\,\hat\bdx)^\h}{\|\bdx_*+\eps\,\hat\bdx\|_2^2}
(\bdx_*+\eps\,\hat\bdx) \,=\,
\mbox{\footnotesize $\left[\begin{array}{c} 1 \\ \bdo \end{array}\right]$}
\end{equation*}
with 
~$\bdx_*+\eps\,\hat\bdx \,\in\,\cK(A+\la_* I)\,\setminus {\cal K}$ 
~for all ~$\eps\,\in\,(0,1)$ ~and 
~$\big\|\eps\,D^{(\eps)}\big\|_2<\eps \,\mu$.
~Namely, ~$C+\eps\,D^{(\eps)} \in \cC$ ~for sufficiently small 
~$\eps$ ~and approaches to ~$C$
~when ~$\eps\,\rightarrow\,0$, ~implying ~$\cC$ ~is dense in ~$\cC_0$
~that is dense in ~$\C^{n\times m}$ ~so the lemma is proved.
~\QED
\end{proof}

The following lemma sets the foundation for our sensitivity analysis
and algorithm design on a defective eigenvalue 
by laying out critical properties of the mapping
(\ref{jbitf}).

\begin{lemma}\label{l:jbit}
~Let ~$A\in\C^{n\times n}$ ~with ~$\la_*\,\in\,\eig(A)$
~of multiplicity support ~$m_*\times k_*$ ~and ~$\bdg$ ~be as
in {\em (\ref{jbitf})} with ~$S$ ~and ~$T$ ~as in {\em (\ref{S})} 
~and {\em (\ref{jbitT})} respectively.
~The following assertions hold. 

\vspace{-4mm}
\begin{itemize}\parskip-0.5mm
\item[\em (i)] 
For almost all ~$C\in\C^{n\times m}$ ~as a parameter for ~$\bdg$, 
~an ~$X_* \in\, \C^{n\times k}$ 
~exists such that ~$\bdg(A,\la_*,X_*) = \bdo$ ~if and only if 
~$m\le m_*$ ~and ~$k\le k_*$.
~Such an ~$X_*$ ~is unique if and only if ~$m=m_*$.
\item[\em (ii)] ~Let ~$m\le m_*$ ~and ~$k\le k_*$.
~For almost all ~$C\in\C^{n\times m}$ ~in ~$\bdg$ ~with
~$\bdg(A,\la_*,X_*) = \bdo$,
~the linear transformation ~$\bdg_{_{G \la X}}(A,\la_*,X_*)$ ~is surjective,
~and ~$\bdg_{_{\la X}}(A,\la_*,X_*)$ ~is injective
if and only if ~$m=m_*$ ~and ~$k=k_*$.
\item[\em (iii)]
~~Let ~$m=m_*$, ~$k=k_*$  ~and ~$\bdg(A,\la_*,X_*) = \bdo$.
~Then ~$C$ ~and ~$S$ ~can be modified so that the columns of 
~$X_*$ ~are orthonormal.
\end{itemize}
\end{lemma}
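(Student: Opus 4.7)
My plan addresses the three assertions in order, with Lemma~\ref{l:ms} as the workhorse: its range containment $\cK((A-\la_* I)^j)\subset\cR(A-\la_* I)$ for $j\le k_*-1$, together with its characterization of the unique kernel element $\bdx_1$ with $C^\h\bdx_1=[1,\bdo^\top]^\top$ as lying in $\bigcap_{j=1}^{k_*-1}\cR((A-\la_* I)^j)\setminus\cR((A-\la_* I)^{k_*})$ for $C\in\cC$, will drive both feasibility and obstruction throughout.

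For (i), I would decompose $\bdg(A,\la_*,X_*)=\bdo$ column by column with $X_*=[\bdx_1,\ldots,\bdx_k]$: the equations read $(A-\la_* I)\bdx_j = \sum_{i<j}s_{ij}\bdx_i$ and $C^\h\bdx_j$ equals the $j$th column of $T$. Lemma~\ref{l:ms} produces $\bdx_1$ uniquely for $C\in\cC$, and each subsequent right-hand side inductively lies in $\cR(A-\la_* I)$, so $\bdx_j$ exists whenever $k\le k_*$. Conversely, iterating the recurrence yields $(A-\la_* I)^{k-1}\bdx_k = s_{12}\cdots s_{k-1,k}\,\bdx_1$, placing $\bdx_1\in\cR((A-\la_* I)^{k-1})$, which fails for $k-1\ge k_*$ by Lemma~\ref{l:ms}. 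The residual freedom in each $\bdx_j$ by an element of $\cK(A-\la_* I)$ is pinned down uniquely by $C^\h\bdx_j=\bdo$ exactly when $C^\h$ is injective on $\cK(A-\la_* I)$, which for generic $C$ requires $m\ge m_*$; combined with the condition $m\le m_*$ for generic solvability of $C^\h\bdx_1=[1,\bdo^\top]^\top$, uniqueness is equivalent to $m=m_*$.

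For (ii), surjectivity of $\bdg_{_{G\la X}}(A,\la_*,X_*)$ exploits the abundance of freedom in $G$: for a target $(Y,Z)$, I would pick any $X$ with $C^\h X=Z$ (possible since $C$ has full column rank generically), take $\la=0$, and set $G=[Y-(A-\la_* I)X+XS]\,X_*^\dagger$; since the columns of $X_*$ form a nondegenerate Jordan chain rooted at $\bdx_1\ne\bdo$, $X_*$ has full column rank, so $X_*^\dagger X_*=I$ and the first component is hit. For injectivity of $\bdg_{_{\la X}}$, the kernel equation $\la X_*=(A-\la_* I)X-XS$ with $C^\h X=O$ admits an explicit iteration: induction on $j$, using $(A-\la_* I)X_*=X_*S$, yields $(A-\la_* I)^j X = j\la X_*S^{j-1}+XS^j$, and with $j=k$ (noting $S^k=O$) this gives $(A-\la_* I)^k X = k\la\,s_{12}\cdots s_{k-1,k}\,[\bdo,\ldots,\bdo,\bdx_1]$. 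When $k=k_*$, the last column of the left-hand side lies in $\cR((A-\la_* I)^{k_*})$ while $\bdx_1\notin\cR((A-\la_* I)^{k_*})$ by Lemma~\ref{l:ms}, forcing $\la=0$; the homogeneous version of (i) then yields $X=O$. When $m<m_*$, any nonzero $\bdz\in\cK(A-\la_* I)\cap\cK(C^\h)$ produces the nontrivial kernel element $(0,[\bdo,\ldots,\bdo,\bdz])$, since the trailing zero row of $S$ annihilates $XS$. When $k<k_*$, Lemma~\ref{l:ms} gives $\bdx_1\in\cR((A-\la_* I)^k)$, so the iteration identity can be inverted to produce a last column $\bdx_k$ compatible with $\la\ne 0$, and I would then construct the earlier columns inductively, adjusting by kernel elements to enforce $C^\h X=O$. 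The main technical obstacle is this last construction, where matching the $S$-coefficients throughout the chain together with the $C^\h$-constraints requires careful bookkeeping.

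For (iii), I would apply a QR factorization $X_*=QR$ with orthonormal $Q\in\C^{n\times k}$ and invertible upper-triangular $R\in\C^{k\times k}$ (by the full column rank of $X_*$). Substituting in $(A-\la_* I)X_*=X_*S$ gives $(A-\la_* I)Q=Q\,(RSR^{-1})$, and a direct computation confirms $S':=RSR^{-1}$ is strictly upper triangular with nonzero superdiagonal entries $r_{ii}s_{i,i+1}/r_{i+1,i+1}$, so $S'$ satisfies {\em (\ref{S})}. To reinstate the normalization, I would take $C'=QT^\h+W$ with $W\in\C^{n\times m}$ having columns orthogonal to $\cR(Q)$ and chosen generically; then $C'^\h Q=T(Q^\h Q)+W^\h Q=T$, and generic $W$ keeps $C'^\h$ injective on $\cK(A-\la_* I)$, placing $C'$ in the dense open set $\cC$ from Lemma~\ref{l:ms} so that the preceding parts still apply to the new triple $(C',S',Q)$.
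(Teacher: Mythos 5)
Your plan tracks the paper's proof closely in most places: the column-by-column induction for (i), the explicit preimage formula $G=\bigl(Y-(A-\la_* I)X+XS\bigr)X_*^\dagger$ for surjectivity, and the iterated identity $(A-\la_* I)^kY=YS^k+k\sigma X_*S^{k-1}$ forcing $\sigma=0$ via $\bdx_1\notin\cR\bigl((A-\la_* I)^{k_*}\bigr)$ are all exactly the paper's arguments. Two of your variations are genuine improvements in explicitness: for $m<m_*$ you exhibit the concrete kernel element $\bigl(0,[\bdo,\ldots,\bdo,\bdz]\bigr)$ with $\bdz\in\cK(A-\la_* I)\cap\cK(C^\h)$, where the paper only gestures at a positive-dimensional solution variety; and your (iii) replaces $C$ wholesale by $QT^\h+W$ with $\cR(W)\perp\cR(Q)$, which is a clean alternative to the paper's more delicate partial modification of $C$ and $S$ before the QR step (your verification that $RSR^{-1}$ retains the form (\ref{S}) with superdiagonal entries $r_{ii}s_{i,i+1}/r_{i+1,i+1}$ is correct and necessary).

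The one genuine gap is the non-injectivity of $\bdg_{_{\la X}}$ when $k<k_*$, which you flag but do not resolve, and your proposed order of construction is backwards. Producing "a last column compatible with $\la\ne 0$" first and then filling in earlier columns cannot work as stated: in the system $(A-\la_* I)\bdy_j=\hat\bdx_j+\sum_{i<j}s_{ij}\bdy_i$, the earlier columns do not depend on $\bdy_k$, so fixing $\bdy_k$ first creates a consistency condition you cannot control. The correct move --- and the paper's --- is a forward induction: $k<k_*$ gives $\hat\bdx_1,\ldots,\hat\bdx_k\in\cK\bigl((A-\la_* I)^k\bigr)\subset\cR(A-\la_* I)$ by (\ref{kjran}), so $\bdy_1=\frac{1}{s_{12}}\hat\bdx_2$ solves the first column exactly (with $C^\h\bdy_1=\bdo$ for free from $C^\h\hat{X}=T$), and inductively each right-hand side $\hat\bdx_{j+1}+\sum_{i\le j}s_{i,j+1}\bdy_i$ lies in $\cR(A-\la_* I)$ because $\bdy_1,\ldots,\bdy_j\in\spn\{\hat\bdx_2,\ldots,\hat\bdx_{j+1}\}\subset\cK\bigl((A-\la_* I)^{j+1}\bigr)$; the generic full-column-rank of $\bigl[\begin{smallmatrix}A-\la_* I\\ C^\h\end{smallmatrix}\bigr]$ then pins down $\bdy_{j+1}$ with $C^\h\bdy_{j+1}=\bdo$. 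This yields $\bdg_{_{\la X}}(A,\la_*,\hat{X})(1,Y)=\bdo$ with no bookkeeping beyond tracking that each $\bdy_j$ stays in the span of the chain vectors. With that step repaired, your proposal is sound.
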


\begin{proof}
Let ~$N\in\C^{n\times m_*}$ ~be a matrix whose columns span 
~$\cK(A-\la_* I)$.
~We shall prove the assertion (i) by an induction.
~For almost all ~$C\in\C^{n\times m}$, ~the matrix ~$C^\h N$ ~is 
of full row rank if ~$m\le m_*$ ~so that there is a ~$\bdu\,\in\,\C^{m_*}$ 
~such that ~$(C^\h\,N)\, \bdu \,=\, T_{1:m,1}$
~while ~$\bdu$ ~is unique if and only if ~$m=m_*$.
~For ~$m\le m_*$, ~let ~$\bdx_1=N\,\bdu$ ~and assume 
~$\bdx_1,\ldots,\bdx_j \in \C^{n}$ 
~are obtained such that ~$1\le j < k$ ~and
\begin{eqnarray*} (A-\la_* I) \big[\bdx_1,\cdots,\bdx_j\big] &~~=~~ &
\big[\bdx_1,\cdots,\bdx_j\big]\, S_{1:j,1:j} \\
C^\h \big[\bdx_1,\cdots,\bdx_j\big] & = & T_{1:m,1:j}
\end{eqnarray*}
Then ~$\bdx_1,\ldots,\bdx_j \in \cK\big((A-\la_* I)^j\big)$ ~from
~$(S_{1:j,1:j})^j = O$, ~and (\ref{kjran}) implies that
\[ (A-\la_* I)\, \bdx ~~=~~ s_{1,j+1}\bdx_1 + \cdots + s_{j,j+1}\bdx_j
~~\equiv~~ [\bdx_1,\ldots,\bdx_j]\,S_{1:j,j+1}
\]
has a particular solution ~$\bdu\,\in\,\C^n$
~and a unique solution 
~$\bdx_{j+1} \,=\, \bdu - N\,(C^\h N)^{-1} C^\h \bdu$
~such that ~$C^\h\,\bdx_{j+1} = \bdo$ ~when ~$m=m_*$.
~By induction, there is a matrix
~$X_*= \big[\bdx_1,\cdots,\bdx_k\big]\in\C^{n\times k}$ ~such that 
~$(\la_*,X_*)$ ~is a solution to the system
~$\bdg(A,\la,X) = \bdo$ ~and ~$X_*$ ~is unique if and only if ~$m=m_*$.
~The assertion (i) is proved.

Assume ~$\bdg(A,\la_*,X_*) = \bdo$ ~and write 
~$X_* = \blb \bdx_1,\cdots,\bdx_k\brb$.
~Then ~$\bdx_1\ne\bdo$ ~and 
~$\bdx_j\,\in\,\cK\big((A-\la_* I)^j\big)\setminus
\cK\big((A-\la_* I)^{j-1}\big)$
~for ~$j=1,\ldots,k$ 
~by ~$S_{1:j,1:j}$ ~being upper triangular nilpotent.
~So ~$X_*$ ~is of full column rank and ~$X_*^\dagger X_*=I$.
~Furthermore, 
the Jacobian ~$\bdg_{_{G \la X}}(A,\la_*,X_*)$ ~is surjective since,
for any ~$U\,\in\,\C^{n\times k}$ ~and ~$V\,\in\,\C^{m\times k}$, 
~a straightforward calculation using (\ref{gGlaX}) yields
\[  \bdg_{_{G \la X}}(A,\la_*,X_*)\big((U-(A-\la_*I)\,C^{\h\dagger}V+
C^{\h\dagger}V S)\,X_*^\dagger,\,0,\, C^{\h \dagger}\, V\big) ~~=~~
\mbox{\scriptsize $\left(\begin{array}{c} U \\ V
\end{array}\right)$} 
\]
using ~$C^\h\,C^{\h \dagger} = I$ ~when ~$C$ ~is of full column rank.
~Let ~$(A,\la_*,\hat{X})$ ~be a zero of ~$\bdg$ ~and assume ~$m=m_*$ ~and
~$k=k_*$.
~Then, for almost all ~$C\in\C^{n\times m}$, ~the solution ~$\bdu = \hat\bdu$ 
~of the equation ~$(C^\h\,N)\,\bdu = T_{1:m,1}$ ~is unique and
the first column of ~$\hat{X}$, ~from Lemma~\ref{l:ms}, ~is 
\begin{equation}\label{bdx1in}
 \hat\bdx_1 ~~=~~ N\,\hat\bdu  ~~\in~~ 
\left(\mbox{$\bigcap_{j=1}^{k-1} \cR\big((A-\la_* I)^j\big)$} \right)
~\setminus~\cR\big((A-\la_* I)^{k}\big).
\end{equation}
Assume, ~for a ~$(\sg,Y)\in\C\times\C^{n\times k}$, 
~its image 
~$\bdg_{_{\la X}}(A,\la_*,\hat{X}) \big(\sg, Y\big) \,=\,\bdo$.
~By (\ref{glaX}),
\begin{eqnarray}
-\sg\,\hat{X} + (A-\la_* I)\,Y - Y\,S &~~=~~& O \label{sxaly1} \\
C^\h\,Y & ~~=~~ &  O. \label{sxaly2}
\end{eqnarray}
Right-multiplying both sides of the equation (\ref{sxaly1}) by ~$S$ ~yields
\begin{align*}   
Y\,S^2 + \sg\,\hat{X}\,S &~~=~~  (A-\la_* I)\,Y\,S \nonumber \\
  &~~=~~ (A-\la_* I)^2\,Y - \sg\,(A-\la_* I)\,\hat{X} & 
\mbox{(by (\ref{sxaly1}))}
\\ 
&~~=~~  (A-\la_* I)^2\,Y - \sg\,\hat{X}\,S,
& \mbox{(by ~$(A-\la_* I)\hat{X}=\hat{X}S$)}
\end{align*}
namely 
\[(A-\la_* I)^2\,Y \,=\,Y\,S^2 + 2\,\sg\,\hat{X}\,S.
\]
Continuing the process of recursive right-multiplying the equation by 
~$S$ ~leads to
\begin{eqnarray*}  (A-\la_* I)^k\,Y & ~~=~~& 
Y\,S^k + k\,\sg\,\hat{X}\,S^{k-1}
~~=~~ \,k\,\sg\,s_{12}\,s_{23}\,\cdots\,s_{k-1,k} 
\blb O_{n\times (k-1)}, ~\hat\bdx_1\brb
\end{eqnarray*}
with ~$s_{12}\,s_{23}\,\cdots\,s_{k-1,k} \ne 0$.
~Hence ~$\sg=0$ ~due to (\ref{bdx1in}).
~Denote columns of ~$Y$ ~as ~$\bdy_1,\ldots,\bdy_k\in\C^{n}$. 
~Then the first columns of the equations (\ref{sxaly1}) and (\ref{sxaly2}) are
~$(A-\la_* I)\,\bdy_1 \,=\, \bdo$ ~and ~$C^\h\,\bdy_1 \,=\, 0$
~that imply ~$\bdy_1=\bdo$.
~For ~$1\le j< k$, using ~$\sg=0$ ~and ~$\bdy_1=\cdots=\bdy_j=\bdo$ ~on the 
~$(j+1)$-th columns of the equations (\ref{sxaly1}) and (\ref{sxaly2}) we have
~$\bdy_{j+1} = \bdo$.
~Thus ~$Y=O$.
~As a result, ~$(A,\la_*,\hat{X})$ ~is a zero of ~$\bdg$ ~with injective
partial Jacobian ~$\bdg_{_{\la X}}(A,\la_*,\hat{X})$. 

If ~$m<m_*$, ~the solution ~$(\la_*,X_*)$ ~of ~$\bdg(A,\la,X)=\bdo$ ~is 
on an algebraic variety of a positive dimension and thus 
~$\bdg_{_{\la X}}(A,\la_*,X_*)$ ~is not injective.
~Let ~$m=m_*$, ~we now prove the partial Jacobian 
~$\bdg_{_{\la X}}(A,\la_*,\hat{X})$ ~is injective {\em only if} ~$k=k_*$.
~Assume ~$k<k_*$ ~and ~write ~$\hat{X} = \blb \hat\bdx_1,\cdots,\hat\bdx_k\brb$.
~Since ~$\bdg(A,\la_*,\hat{X})=\bdo$ ~and ~$S$ ~is upper-triangular nilpotent,
hence ~$\hat\bdx_j \,\in\,\cK\big((A-\la_* I)^j\big)$ ~for ~$j=1,\ldots,k$.
~Then ~$k<k_*$ ~implies ~$\hat\bdx_1,\ldots,\hat\bdx_k \,\in\,\cR(A-\la_*I)$.
~For almost all ~$C\in\C^{n\times m}$, ~the matrix
~{\scriptsize $\left[\begin{array}{c} A-\la_* I \\ C^\h \end{array}\right]$}
~is of full column rank and the vector
~$\bdy_1 \,=\, \frac{1}{s_{12}}\,\hat\bdx_2$ ~is the unique solution
to the linear system ~$\mbox{\scriptsize 
$\left[\begin{array}{c} A-\la_* I \\ C^\h \end{array}\right]$}\,\bdz 
\,=\, \mbox{\scriptsize
$\left[\begin{array}{c} \hat\bdx_1 \\ \bdo \end{array}\right]$}$.
~Using an induction, assume 
~$\bdy_1,\ldots,\bdy_j \in \spn\{\hat\bdx_2,\ldots, \hat\bdx_{j+1}\}$ 
~for any ~$j<k$ ~such that
\begin{eqnarray*}
-\blb \hat\bdx_1,\cdots,\hat\bdx_j\brb 
+ (A-\la_* I)\blb \bdy_1,\cdots,\bdy_j\brb
- \blb \bdy_1,\cdots,\bdy_j\brb\,S_{1:j,1:j} & ~~=~~& O \\
C^\h\,\blb \bdy_1,\cdots,\bdy_j\brb & ~~=~~& O .
\end{eqnarray*}
Then ~$\bdy_1,\ldots,\bdy_j \,\in\,\cK\big((A-\la_* I)^{j+1}\big)$ ~and
(\ref{kjran}) imply that there is a unique vector
~$\bdz=\bdy_{j+1}\in\spn\{\bdx_1,\ldots,\hat\bdx_{j+1}\}$ ~satisfying
\[ (A-\la_* I) \bdz ~~=~~ 
\hat\bdx_{j+1} + s_{1,j+1}\bdy_1+\cdots+s_{j,j+1}\bdy_j ~~~~\mbox{and}~~~
C^\h\,\bdz ~~=~~ \bdo.
\]
Write ~$Y = \blb \bdy_1,\cdots,\bdy_k \brb$.
~We have ~$\bdg_{_{\la X}}(A,\la_*,\hat{X})\big(1, Y\big) \,=\, \bdo$
~and thus the partial Jacobian ~$\bdg_{_{\la X}}(A,\la_*,\hat{X})$ ~is not
injective.
~As a result, the assertion (ii) is proved.

We now prove (iii). 
~Let ~$\bdg(A,\la_*,\hat{X})\,=\,\bdo$ ~for certain parameters ~$C$ ~and
~$S$.
~We can assume ~$C$ ~and $S$ ~are properly scaled so that 
~$\|\hat{X}_{1:n,1}\|_2=1$.
~Reset columns ~$C_{1:n,1}$ ~as ~$\hat{X}_{1:n,1}$, 
~$\hat{X}_{1:n,2:k}$ ~as ~$\hat{X}_{1:n,2:k} - \hat{X}_{1:n,1}\, 
(\hat{X}_{1:n,1})^\h \hat{X}_{1:n,2:k}$
~and ~$S_{1,1:k}$ ~as ~$S_{1,1:k}+(\hat{X}_{1:n,1})^\h \hat{X}_{1:n,2:k}
\,S_{2:k,1:k}$ ~so that ~$\bdg(A,\la_*,\hat{X})=\bdo$ ~still holds
and ~$(\hat{X}_{1:n,2:k})^\h \hat{X}_{1:n,1} =\bdo$.
~As a result, there is a thin QR decomposition ~$\hat{X} = Q\,R$ ~with 
~$R_{1,1:k}=[1,0,\cdots,0]$.
~Reset ~$X_*=Q$ ~and ~$S$ ~as ~$R\,S\,R^{-1}$.
~It is thus a straightforward verification that ~$\bdg(A,\la_*,X_*)=\bdo$
~with ~$(X_*)^\h X_* = I$.
~\QED
\end{proof}

\vspace{-4mm}
\section{Sensitivity of a defective eigenvalue}

\vspace{-4mm}
Based on Lemma~\ref{l:jbit} and the Implicit 
Function Theorem, the following lemma establishes 
the defective eigenvalue as a holomorphic function
of certain entries of the matrix.

\begin{lemma}\label{l:jbith}
~Assume ~$A\in\C^{n\times n}$ ~and ~$\la_*\in\eig(A)$ ~of multiplicity
support ~$m\times k$.
~Let ~$\bdg$ ~be defined in {\em (\ref{jbitf})} using proper parameters
~$C\in\C^{n\times m}$ ~and ~$S\in\C^{n\times k}$ ~so that 
~$\bdg(A,\la_*,X_*) = \bdo$ 
~with a surjective
~$\bdg_{_{G \la X}}(A,\la_*,X_*)$ ~and an injective 
~$\bdg_{_{\la X}}(A,\la_*,X_*)$.
~There is a neighborhood ~$\Omega$ ~of certain ~$\bdz_*$ ~in 
~$\C^{n^2-m\,k+1}$ ~and a neighborhood ~$\Sigma$ ~of ~$(A,\la_*,X_*)$
~in ~$\C^{n\times n}\times\C\times\C^{n\times k}$ ~along with
holomorphic mappings 
~$G\,:\,\Omega\longrightarrow\C^{n\times n}$, 
~$\la\,:\,\Omega\longrightarrow\C$ ~and
~$X\,:\,\Omega\longrightarrow\C^{n\times k}$ ~with
~$\big(G(\bdz_*),\la(\bdz_*),X(\bdz_*)\big) = (A,\la_*,X_*)$ ~such that
~$\bdg(G_0,\la_0,X_0) = \bdo$ ~at any point ~$(G_0,\la_0,X_0)\in\Sigma$ 
~if and only if there is a ~$\bdz_0\in\Omega$ ~such that
~$(G_0,\la_0,X_0) = (G(\bdz_0),\la(\bdz_0),X(\bdz_0))$.

\end{lemma}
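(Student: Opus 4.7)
The plan is to reduce the lemma to a direct application of the holomorphic Implicit Function Theorem to the mapping $\bdg$ of (\ref{jbitf}). A dimension count confirms consistency: the domain $\C^{n\times n}\times\C\times\C^{n\times k}$ has complex dimension $n^2+1+nk$ and the codomain $\C^{n\times k}\times\C^{m\times k}$ has dimension $nk+mk$, so under the surjectivity hypothesis the zero level set is a complex submanifold of dimension $n^2-mk+1$, matching the claimed dimension of $\Omega$.

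The first step is to split the $n^2+1+nk$ domain coordinates into a \emph{free} block $\bdz$ of dimension $n^2-mk+1$, consisting of selected entries of $G$, and a \emph{dependent} block of dimension $nk+mk$ consisting of $(\la,X)$ together with the remaining $mk-1$ entries of $G$. To apply the holomorphic Implicit Function Theorem I need the split to be admissible, meaning that the Jacobian of $\bdg$ restricted to the dependent variables, evaluated at $(A,\la_*,X_*)$, is a $\C$-linear automorphism of $\C^{nk+mk}$. Once such a split is in hand, the theorem immediately furnishes a polydisc neighborhood $\Omega$ of the point $\bdz_*$ obtained by reading off the free entries of $A$, along with holomorphic functions expressing $(\la,X)$ and the $mk-1$ dependent entries of $G$ as functions of $\bdz$. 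Packaging these with the identity on the free coordinates yields the maps $G$, $\la$, $X$ on $\Omega$, a neighborhood $\Sigma$ of $(A,\la_*,X_*)$, and the normalization $(G(\bdz_*),\la(\bdz_*),X(\bdz_*))=(A,\la_*,X_*)$; the biconditional follows from the uniqueness clause of the theorem once $\Sigma$ is taken small enough.

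The main obstacle is proving that an admissible split exists. By the injectivity hypothesis the image $W_1$ of $\bdg_{_{\la X}}(A,\la_*,X_*)$ is a subspace of the codomain of dimension $1+nk$; fix any complement $W_2$, which has dimension $mk-1$. By the surjectivity hypothesis the composition of the $G$-partial differential with the quotient projection onto $W_2$ is a surjection from $\C^{n\times n}$ onto $W_2$. Consequently I can select $mk-1$ standard basis matrices $E_{i_1j_1},\ldots,E_{i_{mk-1}j_{mk-1}}$ in $\C^{n\times n}$ whose images under that composition span $W_2$. Designating the entries $(i_1,j_1),\ldots,(i_{mk-1},j_{mk-1})$ of $G$ as the dependent ones and the remaining entries as $\bdz$, the Jacobian on the dependent block is the map $\bdg_{_{\la X}}(A,\la_*,X_*)$ adjoined with the $G$-derivatives along those $mk-1$ directions. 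Its image contains $W_1$ by injectivity and $W_2$ by construction, so it covers the entire codomain; since domain and codomain both have dimension $nk+mk$, it is a $\C$-linear isomorphism and the hypothesis of the holomorphic Implicit Function Theorem is met.
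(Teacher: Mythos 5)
Your proof is correct and follows essentially the same route as the paper's: designate $mk-1$ entries of $G$ as dependent variables alongside $(\la,X)$ so that the restricted Jacobian becomes invertible, then invoke the holomorphic Implicit Function Theorem. In fact, your complement-subspace argument for why such a choice of $mk-1$ entries exists supplies the justification for the one step the paper merely asserts.
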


\begin{proof}
~Since the mapping ~$(G,\la,X)\,\mapsto\,\bdg(G,\la,X)$ ~has a surjective
Jacobian
~$\bdg_{_{G \la X}}(A,\la_*,X_*)$ ~to 
~$\C^{n\times k}\times\C^{m\times k}$ ~and an injective
~$\bdg_{_{\la X}}(A,\la_*,X_*)$ ~from ~$\C\times\C^{n\times k}$, 
~there are ~$m\,k-1$ ~entries of the variable ~$G\in\C^{n\times n}$ 
~forming a variable ~$\bdy$ ~such that the partial Jacobian 
~$\bdg_{_{\bdy \la X}}(A,\la_*,X_*)$ ~is invertible. 
~By the Implicit Function Theorem, the remaining entries of ~$G$ ~excluding 
~$\bdy$ ~form a variable vector ~$\bdz\in\C^{n^2-m\,k+1}$ ~so
that the assertion holds.
\QED
\end{proof}

From the proof of Lemma~\ref{l:jbith}, the components
of the variable ~$\bdz$ ~are identical to ~$n^2-m\,k+1$ 
entries of the matrix ~$G(\bdz)$.
~We can now establish one of the main theorems of this paper.

\begin{theorem}[Eigenvalue Sensitivity Theorem]\label{t:condmk}
~The sensitivity of an eigenvalue is finitely bounded if
its multiplicity support is preserved.
~More precisely, let the matrix ~$A\in\C^{n\times n}$ ~and 
~$\la_*\in\eig(A)$ ~with 
a multiplicity support ~$m\times k$. 
~There is a neighborhood ~$\Phi$ ~of ~$(A,\la_*)$ ~in 
~$\C^{n\times n}\times\C$ ~and a neighborhood
~$\Omega$ ~of certain ~$\bdz_*$ ~in ~$\C^{n^2-m\,k+1}$ 
~along with holomorphic mappings 
~$G~:~\Omega\rightarrow\C^{n\times n}$ ~and 
~$\la~:~\Omega\rightarrow\C$ 
~with ~$(A,\la_*) =\big(G(\bdz_*),\la(\bdz_*)\big)$ ~such
that every ~$(\tilde{A},\, \tilde\la)\in\Phi$ ~with 
~$\tilde\la\in\eig\big(\tilde{A}\big)$ ~of multiplicity support ~$m\times k$
~is equal to ~$\big(G(\tilde\bdz),\la(\tilde\bdz)\big)$ ~for certain 
~$\tilde\bdz\in\Omega$.
~Furthermore,
\begin{equation}\label{laine}
\limsup_{\bdz\rightarrow \bdz_*}
\frac{\big|\la(\bdz) - \la_*\big|}{\|G(\bdz)-A\|_{_F}} ~~\le~~  \left\|
\bdg_{_{\la X}}(A,\la_*,X_*)^\dagger \right\|_2 
~~<~~  \infty 
\end{equation}
where ~$X_*\in\C^{n\times k}$ ~satisfies ~$\bdg(A,\la_*,X_*)= \bdo$ ~for the 
mapping ~$\bdg$ ~defined in {\em (\ref{jbitf})} ~that renders columns of 
~$X_*$ ~orthonormal.
\end{theorem}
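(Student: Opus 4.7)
The plan is to combine Lemma~\ref{l:jbit}(i)--(iii) to meet the hypotheses of Lemma~\ref{l:jbith} and then establish the bound by implicitly differentiating the identity $\bdg(G(\bdz),\la(\bdz),X(\bdz))\equiv\bdo$ at $\bdz_*$. First, Lemma~\ref{l:jbit}(i) produces an $X_*\in\C^{n\times k}$ with $\bdg(A,\la_*,X_*)=\bdo$ for a generic parameter $C$; Lemma~\ref{l:jbit}(iii) adjusts $C$ and $S$ so that the columns of $X_*$ are orthonormal; and Lemma~\ref{l:jbit}(ii) guarantees that $\bdg_{_{G\la X}}(A,\la_*,X_*)$ is surjective while $\bdg_{_{\la X}}(A,\la_*,X_*)$ is injective. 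These are exactly the hypotheses of Lemma~\ref{l:jbith}, whose conclusion supplies the neighborhoods $\Sigma,\Omega$ and the holomorphic maps $G,\la,X$ parametrizing the zero set of $\bdg$ inside $\Sigma$.

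For the assertion that every $(\tilde A,\tilde\la)\in\Phi$ with multiplicity support $m\times k$ lies on the parametrized family, I would shrink $\Phi$ so that the unique $\tilde X$ supplied by Lemma~\ref{l:jbit}(i) (uniqueness because $m=m_*$) places the triple $(\tilde A,\tilde\la,\tilde X)$ inside $\Sigma$, and then Lemma~\ref{l:jbith} yields the desired $\tilde\bdz\in\Omega$. The hard part is verifying that $\tilde X$ depends continuously on $(\tilde A,\tilde\la)$: one has to argue that as long as the geometric multiplicity of $\tilde\la$ is fixed at $m$ the kernel $\cK(\tilde A-\tilde\la I)$ varies continuously as an $m$-dimensional subspace, so the first column $\tilde N(C^\h\tilde N)^{-1}T_{1:m,1}$ of $\tilde X$ and the remaining columns obtained by the nested linear solves in the proof of Lemma~\ref{l:jbit}(i) all converge to the corresponding columns of $X_*$. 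Once this continuity is in hand, the first assertion of the theorem is immediate.

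For the quantitative bound (\ref{laine}), I would Taylor-expand the identity $\bdg(G(\bdz),\la(\bdz),X(\bdz))\equiv\bdo$ at $\bdz_*$, which by holomorphy gives
\begin{equation*}
\bdo ~=~ \bdg_{_G}(A,\la_*,X_*)\bigl(G(\bdz)-A\bigr) + \bdg_{_{\la X}}(A,\la_*,X_*)\bigl(\la(\bdz)-\la_*,\,X(\bdz)-X_*\bigr) + o\bigl(\|\bdz-\bdz_*\|_2\bigr).
\end{equation*}
Injectivity of $\bdg_{_{\la X}}(A,\la_*,X_*)$ yields a bounded left inverse $\bdg_{_{\la X}}(A,\la_*,X_*)^\dagger$, so
\begin{equation*}
\bigl(\la(\bdz)-\la_*,\,X(\bdz)-X_*\bigr) ~=~ -\bdg_{_{\la X}}(A,\la_*,X_*)^\dagger\,\bdg_{_G}(A,\la_*,X_*)\bigl(G(\bdz)-A\bigr) + o\bigl(\|G(\bdz)-A\|_F\bigr).
\end{equation*}
A direct computation from (\ref{jbitf}) gives $\bdg_{_G}(A,\la_*,X_*)\cdot H=(HX_*,\,O)$, and the orthonormality $\|X_*\|_2=1$ yields $\|HX_*\|_F\le\|H\|_F$, so the operator norm of $\bdg_{_G}(A,\la_*,X_*)$ is at most one. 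Consequently $|\la(\bdz)-\la_*|$ is bounded by $\|\bdg_{_{\la X}}(A,\la_*,X_*)^\dagger\|_2\,\|G(\bdz)-A\|_F + o(\|G(\bdz)-A\|_F)$, and dividing by $\|G(\bdz)-A\|_F$ and taking $\limsup$ yields (\ref{laine}); the right-hand side is automatically finite in the finite-dimensional setting.
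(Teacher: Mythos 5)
Your proposal is correct and follows essentially the same route as the paper: invoke Lemma~\ref{l:jbit} to meet the hypotheses of Lemma~\ref{l:jbith}, use continuity of the unique $\tilde X$ solving the full-rank linear system to place $(\tilde A,\tilde\la,\tilde X)$ in $\Sigma$, and then differentiate $\bdg(G(\bdz),\la(\bdz),X(\bdz))\equiv\bdo$ at $\bdz_*$, using the unit operator norm of $\bdg_{_G}(A,\la_*,X_*)\colon H\mapsto(HX_*,O)$ from the orthonormality of $X_*$ and the injectivity of $\bdg_{_{\la X}}(A,\la_*,X_*)$ for finiteness. The only cosmetic difference is that the paper argues the continuity of $\tilde X$ directly from the injectivity of $X\mapsto\big((\tilde A-\tilde\la I)X-XS,\,C^\h X\big)$ rather than through continuity of the kernel subspace, but the two arguments are interchangeable.
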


\begin{proof}
~Let ~$\Sigma$ ~and ~$\Omega$ ~be the neighborhoods specified
in Lemma~\ref{l:jbith} along with the holomorphic mappings ~$G$ ~and ~$\la$.
~For any ~$(\tilde{A},\tilde\la)$ ~sufficiently close to ~$(A,\la_*)$
~with ~$\tilde\la\,\in\,\eig(A)$ ~of multiplicity support ~$m\times k$,
~the matrix {\scriptsize ~$\left[\begin{array}{c} \tilde{A}-\tilde\la I \\
C^\h \end{array}\right]$} ~is of full rank so there is a unique ~$\tilde{X}$
~such that ~$\bdg(\tilde{A},\tilde\la,\tilde{X})=\bdo$.
~Furthermore, the linear transformation 
~$X\,\mapsto\,\big( (\tilde{A}-\tilde\la I)\,X-X\,S,~C^\h X\big)$
~is injective from ~$\C^{n\times k}$ ~to ~$\C^{n\times k}\times\C^{m\times k}$,
~implying ~$\|\tilde{X}-X_*\|_{_F}$ ~can be as small as needed so that
~$(\tilde{A},\tilde\la,\tilde{X})\,\in\,\Sigma$ ~and thus
~$(\tilde{A},\tilde\la)\,=\,(G(\bdz),\la(\bdz))$ ~for certain 
~$\bdz\,\in\,\Omega$.
~Consequently, ~the neighborhood ~$\Phi$ ~of ~$(A,\la_*)$ ~exists.

From Lemma~\ref{l:jbith}, we have
~$\bdg\big(G(\bdz),\la(\bdz),X(\bdz)\big) \,\equiv\, \bdo$
~for all ~$\bdz\in\Omega$.
~As a result,
\begin{eqnarray*}
\bdo &~~=~~ & \Big(
\mbox{\small
$\pd{\bdg(G(\bdz),\la(\bdz),X(\bdz))}{\bdz}$}\Big|_{\bdz=\bdz_*} 
\Big)
(\bdz-\bdz_*)\\ 
& ~~=~~&
\bdg_{_G}(A,\la_*,X_*)\,G_\bdz(\bdz_*)\,(\bdz-\bdz_*) +
\bdg_{_{\la X}}(A,\la_*,X_*)\,
\Big( \mbox{\small 
$\pd{(\la(\bdz),X(\bdz))}{\bdz}$}\Big|_{\bdz=\bdz_*}\Big)
\,(\bdz-\bdz_*)
\end{eqnarray*}
implying
\begin{eqnarray*}
\big|\la(\bdz)-\la_*\big|  &~~\le~~&
\big\|\big(\la(\bdz),X(\bdz)\big)-\big(\la_*,X_*\big) \big\|_2 \\ 
& = & \Big\|
\mbox{\small $\pd{(\la(\bdz),X(\bdz))}{\bdz}$}
\Big|_{\bdz=\bdz_*}\,(\bdz-\bdz_*) \Big\|_2 
+ O\big(\big\|\bdz-\bdz_*\big\|_2^2\,\big) \\ 
& = &
\left\| \bdg_{_{\la X}}(A,\la_*,X_*)^\dagger\,\bdg_{_G}(A,\la_*,X_*)\,
G_\bdz(\bdz_*)\,(\bdz-\bdz_*) \right\|_2 
+  O\big(\big\|\bdz-\bdz_*\big\|_2^2\,\big) \\ 
& \le &  \left\|\bdg_{_{\la X}}(A,\la_*,X_*)^\dagger\right\|_2\,
\big\|G(\bdz)-A\big\|_F +  O\big(\big\|\bdz-\bdz_*\big\|_2^2\,\big) 
\end{eqnarray*}
since the partial Jacobian 
~$\bdg_{_G}(A,\la_*,X_*)$ ~is the linear transformation ~$G\,\mapsto\,G\,X_*$
with a unit operator norm due to orthonormal columns of ~$X_*$,
~leading to ~(\ref{laine}).
~The norm ~$ \left\|\bdg_{_{\la X}}(A,\la_*,X_*)^\dagger\right\|_2$ ~is 
finite because ~$\bdg_{_{\la X}}(A,\la_*,X_*)$ ~is injective by 
Lemma~\ref{l:jbit}. 
\QED
\end{proof}

In light of Theorem~\ref{t:condmk}, we introduce the ~$m\times k$ 
~{\em condition number} 
\begin{equation}\label{condmk}
\tau_{A,m\times k}(\la_*) ~~:=~~  
\inf_{C,S} \left\|\bdg_{_{\la X}} (A,\la_*,X_*)^\dagger\right\|_2
\end{equation}
of an eigenvalue ~$\la_*\in\eig(A)$ 
~where ~$\bdg$ ~is as in (\ref{jbitf}) and the infimum is 
taken over all the proper choices of matrix parameters ~$C$ ~and ~$S$ ~that 
render the columns of the unique ~$X_*$ ~orthonormal so that 
~$\bdg(A,\la_*,X_*)=\bdo$.
~We shall refer to ~$\tau_{A,m\times k}(\la_*)$ ~as the 
{\em multiplicity support condition number} ~if the specific ~$m$ ~and ~$k$ 
~are irrelevant in the discussion.
~From Lemma~\ref{l:jbit}, the ~$m\times k$ ~condition number is infinity 
{\em only if}\, either ~$m$ ~is less than the actual geometric multiplicity 
or ~$k$ ~is less than the Segre anchor. 
~Consequently, the condition number 
~$\tau_{A,m\times k}(\la_*)$ ~is large only if ~$A$ ~is close to a matrix 
~$\tilde{A}$ ~that possesses an eigenvalue ~$\tilde\la \approx \la_*$ 
~whose multiplicity support is ~$\tilde{m}\times\tilde{k}$ ~with
either ~$\tilde{m}>m$ ~or ~$\tilde{k}>k$.
~As a special case, ~the condition number
~$\tau_{A,1\times 1}(\la_*)$ ~measures the sensitivity of a simple eigenvalue
~$\la_*$.

We can now revisit the old question: 
\begin{quote}
{\em Is a defective eigenvalue hypersensitive to perturbations?}
\end{quote}
The answer is not as simple as the question may seem to be.
~It is well documented in the literature that, under an {\em arbitrary}\, 
perturbation ~$\Delta A$ ~on the matrix ~$A$, ~a defective eigenvalue of ~$A$
~generically disperses into a cluster of eigenvalues with an error 
bound proportional to ~$\big\|\Delta A\big\|_2^{\frac{1}{l}}$ ~where ~$l$
~is the size of the largest Jordan block associated with the eigenvalue
\cite[p.\,58]{chatelin-fraysse}\cite{chatelin-86,Lidskii}.
~Similar and related sensitivity results can be found in the works such as
\cite{BurOve92,lippert-edelman,MoBuOvr97}.
~This error bound implies that the asymptotic sensitivity of a defective 
eigenvalue is infinity, and only a fraction ~$\frac{1}{l}$ ~of the data
accuracy passes on to the accuracy of the eigenvalue.
~For instance, if the largest Jordan block is ~$5\times 5$, ~only three 
correct digits can be expected from the computed eigenvalues regarding the
defective eigenvalue since one fifth the hardware precision (about 16 digits)
remains in the forward accuracy.

It is also known that the mean of the cluster emanating from the defective
eigenvalue under perturbations is not hypersensitive \cite{kato66,ruhe70b}.
~Kahan is the first to discover the finite sensitivity 
~$\frac{1}{m}\,\big\|P\|_2$ ~of a multiple eigenvalue under constrained 
perturbations that preserve the algebraic multiplicity ~$m$, ~where ~$P$ ~is 
the spectral projector associated with the eigenvalue.
~This spectral projector norm is large only if a small perturbation on the
matrix can increase the multiplicity \cite{Kahan72}.
~As pointed out by Kahan, the seemingly infinite sensitivity of 
a multiple eigenvalue may not be a conceptually meaningful measurement for
the condition of a {\em multiple eigenvalue} since arbitrary perturbations 
do not maintain the characteristics of the eigenvalue as being multiple. 
~Theorem~\ref{t:condmk} sheds light on another intriguing and pleasant property 
of a defective eigenvalue: ~Its algebraic multiplicity does {\em not}\, need 
to be maintained under data perturbations for its sensitivity to be under 
control, as long as the geometric multiplicity {\em and} \,the 
Segre anchor are preserved.
~As a result, the condition number ~$\tau_{A,m\times k}(\la_*)$ ~provides a 
new and different measurement on the sensitivity of a {\em defective} 
eigenvalue ~$\la_*$ ~when its multiplicity support is preserved.

The same eigenvalue can be ill-conditioned in the spectral 
projector norm while being well conditioned in multiplicity support condition
number and vice versa (c.f.  Example~\ref{e:4} in \S\ref{s:w}) 
with no contradiction whatsoever.

More importantly, the finite sensitivity enables accurate numerical computation
of a defective eigenvalue from imposing the constraints on the 
multiplicity support, as we shall demonstrate in later sections.
~Even if perturbations are unconstrained, the problem of computing a defective
eigenvalue may not have to be hypersensitive at all if the problem is properly
generalized, i.e. regularized.
~We shall prove in Theorem~\ref{t:pet} that the ~$m\times k$ ~condition number 
still provides the finitely bounded sensitivity of ~$\la_*$ ~as what we call 
the ~$m\times k$ ~pseudo-eigenvalue of ~$A$, ~and this condition number is 
large only if ~$m$ ~or ~$k$ ~can be increased by small perturbations.

There are further subtleties on the condition of a defective eigenvalue.
~The sensitivity is finitely bounded if the multiplicity or the multiplicity
support of {\em the}\, eigenvalue is preserved.
~Denote the collection of ~$n\times n$ ~complex matrices having an 
eigenvalue that shares the same multiplicity support ~$m\times k$ ~as
~$\cE_{m\times k}^n$.
~Every ~$A\,\in\, \cE_{m\times k}^n$ 
~has an eigenvalue ~$\la_*$ ~along with an  ~$X_*$ ~such that
~$(A,\la_*,X_*)$ ~belongs to an algebraic variety defined by the solution set
of the polynomial system ~$\bdg(G,\la,X)=\bdo$.
~The set ~$\cE_{m\times k}^n$ ~is not a manifold in general
so the Tubular Neighborhood Theorem does not apply.
~As a result, maintaining a multiplicity support 
is not enough to dampen the sensitivity of a particular 
defective eigenvalue with that multiplicity support.
~The matrix staying on ~$\cE_{m\times k}^n$ ~does not guarantee the finite
sensitivity of a defective eigenvalue.
~If a matrix ~$A\in\cE_{m\times k}^n$ ~has two eigenvalues of the same
multiplicity support ~$m\times k$, ~then ~$A$ ~is in the intersection of
images of two holomorphic mappings described in Lemma~\ref{l:jbith}.
~When ~$A$ ~drifts on ~$\cE_{m\times k}^n$, ~the multiplicity support 
~$m\times k$ ~may be maintained for one eigenvalue but lost on the other.
~Consequently, the {\em other} defective eigenvalue still disperses into a 
cluster.

\vspace{-4mm}
\section{A well-posed defective eigenvalue problem}

\vspace{-4mm}
A mathematical problem is said to be well-posed if its solution satisfies 
three crucial properties: ~Existence, uniqueness and Lipschitz continuity.
~The problem of finding an eigenvalue of a matrix in its conventional 
meaning is ill-posed when the eigenvalue is defective because 
the sensitivity of the eigenvalue is infinite with respect to arbitrary 
perturbations on the matrix.
~Lacking Lipschitz continuity with respect to data, 
such a problem is not suitable for numerical computation unless the problem
is properly modified, or better known as being {\em regularized}.

We can alter the problem of 
\vspace{-4mm}
\begin{quote}
\em finding an eigenvalue of a matrix ~$A$
\end{quote}
\vspace{-4mm}
to 
\vspace{-4mm}
\begin{quote} \em finding a \,$\la_*$
\,as a part of the least squares solution to 
~$\bdg(A,\la,X)=\bdo$
\end{quote}
\vspace{-4mm}
where ~$\bdg$ ~is the mapping defined in (\ref{jbitf}) with proper parameters.
~We shall show that the latter problem is a regularization of the 
former.

For any fixed matrix ~$A$, ~a local least squares solution 
~$(\hat\la,\hat{X})$ ~to the equation ~$\bdg(A,\la,X)=\bdo$ ~is the
minimum point to ~$\|\bdg(A,\la,X)\|_2$ ~in an open subset of 
~$\C\times\C^{n\times k}$ ~where
~$\bdg_{_{\la X}}(A,\hat\la,\hat{X})^\dagger\,\bdg(A,\hat\la,\hat{X}) 
\,=\, \bdo$ ~if ~$\bdg_{_{\la X}}(A,\hat\la,\hat{X})$ ~is injective.
~The least squares solution ~$(\la_*,X_*)$ ~of ~$\bdg(A,\la,X) = \bdo$
~can be solved by the Gauss-Newton iteration
\begin{equation}\label{gnit0}
(\la_{j+1},X_{j+1}) ~~=~~ (\la_j,X_j) - 
\bdg_{_{\la X}}(A,\la_j,X_j)^\dagger\,\bdg(A,\la_j,X_j), ~~~~j=0,1,\ldots
\end{equation}
based on the following local convergence lemma that is adapted 
from \cite[Lemma 2]{ZengAIF}.

\begin{lemma}\cite{ZengAIF}\label{l:gnit}
~Let ~$\bdg$ ~be the mapping in {\em (\ref{jbitf})}.
~For a fixed ~$A\in\C^{n\times n}$, ~assume ~$(\la_*,X_*)$ ~is a 
local least squares solution to ~$\bdg(A,\la,X)\,=\,\bdo$ ~with an
injective ~$\bdg_{_{\la X}}(A,\la_*,X_*)$.
~There is an open convex neighborhood ~$D$ ~of ~$(\la_*,X_*)$ ~and
constants ~$\zeta,\,\gamma\,>0$ ~such that
\begin{align}
\big\|\bdg_{_{\la X}}(A,\la,X)^\dagger\big\|_2 & ~~\le~~ \zeta,
\label{gnin1} \\
\big\|\bdg(A,\la,X)-\bdg(&A,\tilde\la,\tilde{X})-
 \bdg_{_{\la X}}(A,\tilde\la,\tilde{X})\,((\la,X)-
(\tilde\la,\tilde{X}))\big\|_2 \nonumber \\
& ~~\le~~
\gamma\,\big\|(\la,X)\!-\!(\tilde\la,\tilde{X})\big\|_2^2  \label{gnin2}
\end{align}
for all ~$(\la,X),(\tilde\la,\tilde{X})\,\in\,\overline{D}$. 
~Assume there is a ~$\sg<1$ ~such that,
for all ~$(\la,X)\,\in\,D$,
\begin{equation}\label{gnin3}
 \big\|\big(\bdg_{_{\la X}}(A,\la,X)^\dagger-
\bdg_{_{\la X}}(A,\la_*,X_*)^\dagger\big)\,\bdg(A,\la_*,X_*)
\big\|_2 ~~\le~~\sg\,\big\|(\la,X)-(\la_*,X_*)\big\|_2.
\end{equation}
Then, from all ~$(\la_0,X_0)\,\in\,D$ ~such that 
~$\big\|(\la_0,X_0)-(\la_*,X_*)\big\|_2 \,<\,\frac{1-\sg}{\zeta\,\gamma}$
~and 
\begin{equation}\label{gnin4} \big\{(\la,X)\in\C\times\C^{n\times k}~\big|~
\|(\la,X)-(\la_*,X_*)\|_2 \,<\,
\|(\la_0,X_0)-(\la_*,X_*)\|_2 \big\} ~~\subset~~ D,
\end{equation}
the Gauss-Newton iteration {\em (\ref{gnit0})}
is well defined in ~$D$, ~converges to ~$(\la_*,X_*)$ ~and satisfies
~$\big\|(\la_{j+1},X_{j+1})-(\la_*,X_*)\big\|_2 \,\le\, 
\mu\,\big\|(\la_j,X_j) -(\la_*,X_*)\big\|_2$ 
~for ~$j=0,1,\ldots$ 
~with ~$\mu = \sg+\zeta\,\gamma\,\big\|(\la_0,X_0)-(\la_*,X_*)\big\|_2
\,<\,1$.
\end{lemma}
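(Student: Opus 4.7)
The plan is the standard Gauss--Newton local convergence argument, adapted to the pseudoinverse setting. The starting observation is that, because $(\la_*,X_*)$ is a local least squares solution of $\bdg(A,\la,X)=\bdo$ with injective Jacobian $\bdg_{_{\la X}}(A,\la_*,X_*)$, the normal equation
\[
\bdg_{_{\la X}}(A,\la_*,X_*)^\dagger\,\bdg(A,\la_*,X_*) ~~=~~ \bdo
\]
holds at the limit point. Writing $e_j = (\la_j,X_j)-(\la_*,X_*)$ and subtracting $(\la_*,X_*)$ from both sides of (\ref{gnit0}), I would manipulate the iteration into a form that separates two distinct error sources: a term driven by the pseudoinverse change (controlled by $\sg$) and a term driven by the nonlinearity of $\bdg$ (controlled by $\gamma$).

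The main algebraic step is to add and subtract $\bdg(A,\la_*,X_*) + \bdg_{_{\la X}}(A,\la_j,X_j)\,e_j$ inside the update. Using injectivity of $\bdg_{_{\la X}}$, which persists on a neighborhood so that $\bdg_{_{\la X}}(A,\la_j,X_j)^\dagger\,\bdg_{_{\la X}}(A,\la_j,X_j)$ equals the identity on the domain, one obtains
\[
 e_{j+1} ~~=~~ -\bigl(\bdg_{_{\la X}}(A,\la_j,X_j)^\dagger-\bdg_{_{\la X}}(A,\la_*,X_*)^\dagger\bigr)\,\bdg(A,\la_*,X_*) ~-~ \bdg_{_{\la X}}(A,\la_j,X_j)^\dagger\,R_j,
\]
where $R_j = \bdg(A,\la_j,X_j)-\bdg(A,\la_*,X_*)-\bdg_{_{\la X}}(A,\la_j,X_j)\,(-e_j)$ is precisely the residual appearing in (\ref{gnin2}) with $(\tilde\la,\tilde X)=(\la_j,X_j)$ and $(\la,X)=(\la_*,X_*)$.

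Taking norms and applying the three hypotheses in order, (\ref{gnin3}) bounds the first summand by $\sg\,\|e_j\|_2$, and the combination of (\ref{gnin1}) with (\ref{gnin2}) bounds the second by $\zeta\,\gamma\,\|e_j\|_2^2$. Thus
\[
 \|e_{j+1}\|_2 ~~\le~~ \bigl(\sg + \zeta\,\gamma\,\|e_j\|_2\bigr)\,\|e_j\|_2.
\]
Under the initial smallness assumption $\|e_0\|_2 < (1-\sg)/(\zeta\,\gamma)$, the contraction factor $\mu = \sg+\zeta\gamma\,\|e_0\|_2$ is strictly less than one, and an induction shows $\|e_j\|_2$ decreases monotonically, so the factor at each step never exceeds $\mu$. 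Finally, the inclusion (\ref{gnin4}) guarantees that each iterate stays inside $D$, keeping all three bounds (\ref{gnin1})--(\ref{gnin3}) in force, which is the main bookkeeping obstacle: one must invoke (\ref{gnin4}) together with monotonicity of the errors to ensure the Gauss--Newton step is well defined at every iteration rather than only at the first. The convergence $(\la_j,X_j)\to(\la_*,X_*)$ with the stated linear rate then follows.
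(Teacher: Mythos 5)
The paper does not actually prove this lemma --- it is imported verbatim (``adapted from \cite[Lemma 2]{ZengAIF}'') --- so there is no in-paper argument to compare against; your proposal supplies the standard Gauss--Newton local-convergence proof that the citation stands in for, and it is essentially correct. The key decomposition is right: using $\bdg_{_{\la X}}(A,\la_j,X_j)^\dagger\,\bdg_{_{\la X}}(A,\la_j,X_j)=I$ (injectivity persisting on $D$) and the stationarity condition $\bdg_{_{\la X}}(A,\la_*,X_*)^\dagger\,\bdg(A,\la_*,X_*)=\bdo$, one gets the split into a $\sg$-term and a $\zeta\gamma$-term, the one-step bound $\|e_{j+1}\|_2\le(\sg+\zeta\gamma\|e_j\|_2)\|e_j\|_2$, and the induction closed by (\ref{gnin4}). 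One correction: your displayed identity for $e_{j+1}$ has a sign/ordering slip. With $R_j$ as you literally wrote it, namely $\bdg(A,\la_j,X_j)-\bdg(A,\la_*,X_*)-\bdg_{_{\la X}}(A,\la_j,X_j)(-e_j)$, the right-hand side evaluates to $-\bdg_{_{\la X}}(A,\la_j,X_j)^\dagger\bdg(A,\la_j,X_j)-e_j$, not $e_{j+1}=e_j-\bdg_{_{\la X}}(A,\la_j,X_j)^\dagger\bdg(A,\la_j,X_j)$. The residual you want is the one actually appearing in (\ref{gnin2}) under your substitution, $\tilde R_j=\bdg(A,\la_*,X_*)-\bdg(A,\la_j,X_j)+\bdg_{_{\la X}}(A,\la_j,X_j)\,e_j$, and the correct identity is
\begin{equation*}
e_{j+1} ~=~ -\bigl(\bdg_{_{\la X}}(A,\la_j,X_j)^\dagger-\bdg_{_{\la X}}(A,\la_*,X_*)^\dagger\bigr)\,\bdg(A,\la_*,X_*) \;+\; \bdg_{_{\la X}}(A,\la_j,X_j)^\dagger\,\tilde R_j .
\end{equation*}
Since $\|\tilde R_j\|_2\le\gamma\|e_j\|_2^2$, the norm estimate and everything downstream are unaffected; this is a transcription error, not a gap.
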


When the matrix ~$A$ ~has an eigenvalue ~$\la_*$ ~of multiplicity support
~$m\times k$, ~there is an ~$X_*$ ~such that ~$(\la_*,X_*)$ ~is an exact
solution to ~$\bdg(A,\la,X)=\bdo$.
~However, when ~$A$ ~is known through its empirical data in ~$\tilde{A}$, 
~a local least squares solution ~$(\tilde\la,\tilde{X})$ ~to 
the equation ~$\bdg(\tilde{A},\la,X)=\bdo$ ~generally has a residual
~$\big\|\bdg(\tilde{A},\tilde\la,\tilde{X})\big\|_2>0$, ~and ~$\tilde\la$ ~may
not be an eigenvalue of either ~$A$ ~or ~$\tilde{A}$.
~For the convenience of elaboration, we call such a ~$\tilde\la$ ~an 
~$m\times k$ ~{\em pseudo-eigenvalue} of ~$\tilde{A}$.
~By changing the conventional problem of computing an eigenvalue to a modified
problem of finding
a pseudo-eigenvalue, the defective eigenproblem is regularized as a well-posed
problem as asserted in the main theorem of this paper.

\begin{theorem}[Pseudo-Eigenvalue Theorem]\label{t:pet}
~Let ~$\la_*$ ~be an eigenvalue of a matrix ~$A\in\C^{n\times n}$ ~with 
a multiplicity 
support ~$m\times k$ ~along with ~$X_*\in\C^{n\times k}$ ~satisfying
~$\bdg(A,\la_*,X_*)=\bdo$ ~where ~$\bdg$ ~is as in
{\em (\ref{jbitf})} with proper parameters ~$C$ ~and ~$S$.
~The following assertions hold.

\vspace{-4mm}
\begin{itemize}\parskip-0.5mm
\item[\em (i)] ~The exact eigenvalue ~$\la_*$ ~of ~$A$ ~is an ~$m\times k$ 
~pseudo-eigenvalue of ~$A$.
\item[\em (ii)] ~There are neighborhoods ~$\Phi$ ~of ~$A$ ~in 
~$\C^{n\times n}$ ~and ~$\Lambda$ ~of ~$\la_*$ ~in ~$\C$ ~such that every 
matrix ~$\tilde{A}\in\Phi$ ~has a unique ~$m\times k$ 
~pseudo-eigenvalue ~$\tilde\la\in\Lambda$ ~that is 
Lipschitz continuous with respect to ~$\tilde{A}$.
\item[\em (iii)] ~For every matrix 
~$\check{A}\,\in\,\Phi$ ~serving as empirical data of ~$A$, ~there is a unique
~$m\times k$ ~pseudo-eigenvalue ~$\check\la\in\Lambda$ ~of ~$\check{A}$ 
~such that
\begin{equation}\label{ferr}
  \big|\check\la-\la_*\big| ~~\le~~\tau_{A,m\times k}(\la_*)\, 
\big\|\check{A}-A\big\|_2
+O\big(\big\|\check{A}-A\big\|_2^2\big).
\end{equation}
\item[\em (iv)] ~The ~$\check\la$ ~in {\em (iii)} is an exact 
eigenvalue of ~$\check{A}+E\,\check{X}^\dagger$ ~with a Jordan
block of size at least ~$k$
~where ~$\check{X}$ ~is the least squares solution of ~$\bdg(\check{A},
\check\la,X) = \bdo$ ~and 
~$E = (\check{A}-\check\la\,I)\,\check{X}-\check{X}\, S$.
~When ~$\check{X}^\h\check{X}=I$, 
~the backward error ~$\|E\,\check{X}^\dagger\|_{_F}$ ~is bounded by
~$\big\|\bdg(\check{A},\check\la,\check{X})\big\|_2$.
\end{itemize}
\end{theorem}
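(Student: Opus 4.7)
The plan is to prove the four assertions in order, leaning on the structural results already in place. Assertion (i) is immediate from the hypothesis: since $\bdg(A,\la_*,X_*)=\bdo$, the pair $(\la_*,X_*)$ achieves the zero residual in $\|\bdg(A,\la,X)\|_2$ and is therefore a (global, a fortiori local) least squares solution, so $\la_*$ qualifies as an $m\times k$ pseudo-eigenvalue of $A$.

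For (ii), I would regard the first-order optimality (normal) equation
\begin{equation*}
\bdg_{_{\la X}}(\tilde A,\la,X)^\h\,\bdg(\tilde A,\la,X)~=~\bdo
\end{equation*}
as a square system in the unknown $(\la,X)$ with parameter $\tilde A$. At the reference point $(\tilde A,\la,X)=(A,\la_*,X_*)$ the factor $\bdg(A,\la_*,X_*)$ vanishes, so differentiating the product collapses the Jacobian in $(\la,X)$ down to $\bdg_{_{\la X}}(A,\la_*,X_*)^\h\,\bdg_{_{\la X}}(A,\la_*,X_*)$, which is invertible by the injectivity of $\bdg_{_{\la X}}(A,\la_*,X_*)$ supplied by Lemma~\ref{l:jbit}. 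The Implicit Function Theorem, or equivalently the local Gauss--Newton convergence of Lemma~\ref{l:gnit}, then yields neighborhoods $\Phi$ of $A$ and $\Lambda$ of $\la_*$ together with a unique smooth assignment $\tilde A\mapsto(\tilde\la,\tilde X)$ with $\tilde\la\in\Lambda$, establishing existence, uniqueness and Lipschitz continuity of the pseudo-eigenvalue.

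For (iii) I would implicit-differentiate the identity $\bdg_{_{\la X}}(\check A,\check\la,\check X)^\h\bdg(\check A,\check\la,\check X)\equiv\bdo$ along a perturbation direction $\check A=A+t\,\Delta A$ at $t=0$. Because $\bdg(A,\la_*,X_*)=\bdo$, every term carrying the derivative of the outer factor $\bdg_{_{\la X}}^\h$ drops out, leaving the clean linearization
\begin{equation*}
(\delta\la,\delta X)~=~-\,\bdg_{_{\la X}}(A,\la_*,X_*)^\dagger\,\bdg_{_G}(A,\la_*,X_*)\,\Delta A.
\end{equation*}
Since $\bdg_{_G}(A,\la_*,X_*)$ is the linear map $G\mapsto G X_*$ and $X_*^\h X_*=I$ forces its operator norm to equal $1$, taking norms and then infimizing over the admissible parameters $C$ and $S$ in the definition (\ref{condmk}) of $\tau_{A,m\times k}(\la_*)$ delivers (\ref{ferr}).

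Assertion (iv) is a direct algebraic check. Because Lemma~\ref{l:jbit} forces the nearby $\check X$ to be of full column rank, $\check X^\dagger\check X=I$, and substituting $B=\check A - E\,\check X^\dagger$ (up to the sign convention used in the statement) into the definition of $E$ gives $(B-\check\la I)\check X=\check X\,S$. The upper-triangular nilpotent structure of $S$ with the nonzero superdiagonal product in (\ref{S}) then makes the columns of $\check X$ a length-$k$ Jordan chain for $\check\la$, so $\check\la$ is an eigenvalue of $B$ whose largest Jordan block has size at least $k$. The backward-error estimate $\|E\,\check X^\dagger\|_{_F}\le\|E\|_{_F}\le\|\bdg(\check A,\check\la,\check X)\|_2$ is then immediate from $\|\check X^\dagger\|_2=1$ in the orthonormal case and from the fact that $E$ is simply one of the two blocks composing $\bdg(\check A,\check\la,\check X)$. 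The only delicate step I expect is the complex-least-squares differentiation in (iii), where one must be careful to interpret the normal equation as a real-Jacobian identity so that the operator-norm bound on $\bdg_{_G}$ is tight and the right-hand side of (\ref{ferr}) appears with the stated matrix norm; everything else reduces to routine algebra.
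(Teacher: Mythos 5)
Your proposal is correct in substance, but for assertions (ii)--(iii) it takes a genuinely different route from the paper. The paper proves existence of the local minimizer by a compactness/contradiction argument (if the minimum of $\|\bdg(\tilde{A},\la,X)\|_2$ over $\overline{\Psi_r}$ escaped to the boundary as $\tilde{A}\to A$, one would obtain a second zero of $\bdg(A,\cdot,\cdot)$ on $\partial\Psi_r$, contradicting Lemma~\ref{l:jbith}), and then gets uniqueness and Lipschitz continuity from the Gauss--Newton contraction of Lemma~\ref{l:gnit}: two local minima would each be fixed points of an iteration converging to the other, and the Lipschitz bound comes from estimating a single Gauss--Newton step $\bdg_{_{\la X}}(\check{A},\tilde\la,\tilde{X})^\dagger\bdg(\check{A},\tilde\la,\tilde{X})$; assertion (iii) then falls out by setting $(\tilde{A},\tilde\la,\tilde{X})=(A,\la_*,X_*)$ so that $\mu=0$. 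Your route via the Implicit Function Theorem applied to the normal equations $\bdg_{_{\la X}}^\h\bdg=\bdo$ is cleaner for uniqueness and differentiability and makes the first-order perturbation formula in (iii) transparent, but it requires two additions to be complete. First, as you note, the conjugation makes the system only real-analytic, so the IFT must be applied over $\mathbb{R}$; that is routine. Second, and more substantively, a pseudo-eigenvalue is defined as a local \emph{minimum} of the residual, not merely a stationary point, so you must check that the IFT branch consists of local minima: this follows because the real Hessian of $\|\bdg(\tilde{A},\cdot,\cdot)\|_2^2$ equals $2\,\bdg_{_{\la X}}^\h\bdg_{_{\la X}}$ (positive definite by injectivity) plus a curvature term proportional to $\|\bdg\|_2$, which is $O(\|\tilde{A}-A\|)$ along the branch, so positive definiteness persists for $\tilde{A}$ near $A$; conversely every local minimum in the (open) neighborhood is stationary, so your uniqueness of stationary points does yield uniqueness of pseudo-eigenvalues. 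With these two points supplied, your argument is a valid and arguably more direct alternative; parts (i) and (iv) coincide with the paper's proof, including the observation that the sign in (iv) is a matter of convention and that $\|E\,\check{X}^\dagger\|_{_F}\le\|E\|_{_F}\le\|\bdg(\check{A},\check\la,\check{X})\|_2$ when $\check{X}^\h\check{X}=I$.
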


\begin{proof}
~The assertion (i) ~is a result of Lemma~\ref{l:jbit} (i).
~For any ~$r>0$, ~denote 
~$\Psi_r\,=\,\big\{(\la,X)\in\C\times\C^{n\times k} ~\big|~
\|(\la,X)-(\la_*,X_*)\|_2<r \big\}$ ~and let ~$r_0>0$ ~such that
~$\{A\}\times\overline{\Psi_{r_0}}$
~is a subset of ~$\Sigma$ ~in Lemma~\ref{l:jbith}.
~Let ~$r\,\in\,(0,r_0)$. 
~Assume there is a matrix ~$\tilde{A}$ ~with 
~$\|\tilde{A}-A\|_2 < \eps$ ~for any ~$\eps>0$ ~such that 
~$\min_{(\la,X)\in\overline{\Psi_r}}\big\|\bdg(\tilde{A},\la,X)\big\|_2$
~is not attainable in ~$\Psi_r$.
~Let ~$\eps\rightarrow 0$.
~Then ~$\tilde{A}\rightarrow A$ ~and there exists an 
~$(\hat\la,\hat{X})\in\overline{\Psi_r}\setminus\Psi_r$ ~such that
~$\|\bdg(A,\hat\la,\hat{X})\|_2$ ~is the minimum 0 of 
~$\big\|\bdg(A,\la,X)\big\|_2$ ~for ~$(\la,X)\in\overline{\Psi_r}$
~and ~$(\hat\la,\hat{X})\,\ne\,(\la_*,X_*)$.
~This is a contradiction to Lemma~\ref{l:jbith}. 
~As a result, there is a neighborhood ~$\Phi_r$ ~of ~$A$ ~for every 
~$r\in (0,r_0)$ ~such that ~$\min_{(\la,X)\in\Psi_r}\,\|\bdg(\tilde{A},\la,X)
\|_2$ ~is attainable at 
certain ~$(\tilde\la,\tilde{X})\,\in\,\Psi_r$ ~for every ~$\tilde{A}\in\Phi_r$, 
~implying the existence of the pseudo-eigenvalue ~$\tilde\la$.

By Lemma~\ref{l:gnit}, we can assume ~$r_1\in (0,r_0)$ ~is small so that 
the inequalities (\ref{gnin1}), (\ref{gnin2}) and
(\ref{gnin3}) hold for ~$\sg=0$ ~and
~$\|(\la,X)-(\tilde\la,\tilde{X})\|_2 \,<\, \frac{1}{2\,(2\zeta)\,(2\gamma)}$ 
~for all ~$(\la,X),\,(\tilde\la,\tilde{X})\,\in\,\overline{\Psi_{r_1}}$.
~By the continuity of ~$\bdg$, ~the corresponding ~$\Phi_{r_1}$ ~can be
chosen so that, for every ~$\hat{A}\,\in\,\Phi_{r_1}$
~with a local minimum point ~$(\hat\la,\hat{X})\,\in\,\Psi_{r_1}$ ~for
~$\|\bdg(\hat{A},\la,X)\|_2$,
~we have ~$\big\|\bdg_{_{\la X}}(\hat{A},\la,X)^\dagger\big\|_2 \,<\, 
2\zeta$,
\begin{eqnarray*}
\mbox{\small $
\big\|\bdg(\hat{A},\la,X)-\bdg(\hat{A},\tilde\la,\tilde{X})-
\bdg_{_{\la X}}(\hat{A},\tilde\la,\tilde{X})\,((\la,X)-
(\tilde\la,\tilde{X}))\big\|_2$}
&~<~& 2\gamma\,\big\|(\la,X)-(\tilde\la,\tilde{X})\big\|_2^2, \\
\big\|\big(\bdg_{_{\la X}}(\hat{A},\la,X)^\dagger-
\bdg_{_{\la X}}(\hat{A},\hat{\la},\hat{X})^\dagger\big)\,
\bdg(\hat{A},\hat\la,\hat{X})
\big\|_2 &~\le~&\mbox{\footnotesize $\frac{1}{2}$}\,
\big\|(\la,X)-(\hat{\la},\hat{X})\big\|_2
\end{eqnarray*}
for all ~$(\la,X),\,(\tilde\la,\tilde{X})\,\in\,\Psi_{r_1}$.
~Let ~$r_2=\frac{1}{3}\,r_1$, ~$\Psi=\Psi_{r_2}$ ~and
~$\Phi=\Phi_{r_1}\cap\Phi_{r_2}$.
~For every ~$\hat{A}\in\Phi$, ~the minimum of ~$\|\bdg(\hat{A},\la,X)\|_2$
~is attainable at ~$(\hat\la,\hat{X})\in\Psi$ ~and, for any initial iterate
~$(\la_0,X_0)\in\Psi$, ~we have
~$\|(\la_0,X_0)-(\hat\la,\hat{X})\|_2 \,<\, 
\frac{1}{2\,(2\zeta)\,(2\gamma)}
\,=\,\frac{1-\frac{1}{2}}{(2\zeta)\,(2\gamma)}$
~and the set ~$\Omega \,=\,\big\{ (\la,X)\in\C\times\C^{n\times k} \,\big|\, 
\|(\la,X)-(\hat\la,\hat{X})\|_2 \,<\,\|(\la_0,X_0)-(\hat\la,\hat{X})\|_2
\big\}$
~is in ~$\Psi_{r_1}$
~since, for every ~$(\la,X)\in\Omega$, ~we have
\begin{eqnarray*} \|(\la,X)-(\la_*,X_*)\|_2 &~~\le~~&
\|(\la,X)-(\hat\la,\hat{X})\|_2 +\|(\hat\la,\hat{X})-(\la_*,X_*)\|_2 \\
& < & \|(\la_0,X_0)-(\hat\la,\hat{X})\|_2 
+r_2 \\
& \le & \|(\la_0,X_0)-(\la_*,X_*)\|_2 +
\|(\la_*,X_*)-(\hat\la,\hat{X})\|_2 + r_2 \\
&~~<~~& r_2+r_2+r_2 ~~=~~ r_1
\end{eqnarray*}
By Lemma~\ref{l:gnit}, for every ~$(\la_0,X_0)\,\in\,\Psi$, ~the Gauss-Newton 
iteration on the equation ~$\bdg(\hat{A},\la,X)=\bdo$ 
~converges to ~$(\hat\la,\hat{X})$.
~This local minimum point ~$(\hat\la,\hat{X})$ ~is unique in ~$\Psi$ ~because,
assuming there is another minimum point ~$(\check\la,\check{X})\in\Psi$ ~of
~$\|\bdg(\hat{A},\la,X)\|_2$, ~the Gauss-Newton iteration converges 
to ~$(\hat\la,\hat{X})$ ~from the initial point ~$(\check\la,\check{X})$.
~On the other hand, the Gauss-Newton iteration from the local minimum point
~$(\check\la,\check{X})$ ~must stay at ~$(\check\la,\check{X})$, ~implying
~$(\check\la,\check{X})\,=\,(\hat\la,\hat{X})$.

On the Lipschitz continuity of the pseudo-eigenvalue, let ~$\tilde{A},
\check{A}\in\Phi$ ~with minimum points ~$(\tilde\la,\tilde{X})$
~and 
~$(\check\la,\check{X})$ ~of ~$\|\bdg(\tilde{A},\la,X)\|_2$ 
~and ~$\|\bdg(\check{A},\la,X)\|_2$ ~respectively in ~$\Psi$.
~The one-step Gauss-Newton iterate 
~$(\la_1,X_1) \,=\, (\tilde\la,\tilde{X}) - 
\bdg_{_{\la X}}(\check{A},\tilde\la,\tilde{X})^\dagger\,
\bdg(\check{A},\tilde\la,\tilde{X})$
~from ~$(\tilde\la,\tilde{X})$ ~on the equation
~$\bdg(\check{A},\la,X)=\bdo$ ~toward ~$(\check\la,\check{X})$ 
~yields the inequality 
~$\big\|(\la_1,X_1)-(\check\la,\check{X})\big\|_2 \,\le\,\mu\,
\big\| (\tilde\la,\tilde{X}) - (\check\la,\check{X}) \big\|_2 $ 
~with ~$0\le\mu<1$ ~by Lemma~\ref{l:gnit}.
~Thus
\begin{eqnarray*}
 \big\|(\check\la,\check{X})-(\tilde\la,\tilde{X})\big\|_2 &~~\le~~& 
 \big\|(\check\la,\check{X})-(\la_1,X_1)\big\|_2 + 
 \big\| (\la_1,X_1) - (\tilde\la,\tilde{X}) \big\|_2  \\
 & \le &
\mu\,\big\|(\check\la,\check{X})-(\tilde\la,\tilde{X})\big\|_2  + 
 \big\| (\la_1,X_1) - (\tilde\la,\tilde{X}) \big\|_2 
\end{eqnarray*}
Using the identity 
~$\bdg_{_{\la X}}(\tilde{A},\tilde\la,\tilde{X})^\dagger\,
\bdg(\tilde{A},\tilde\la,\tilde{X}) \,=\,\bdo$ ~and the Lipschitz 
continuity of ~$\bdg$ ~and ~$\bdg_{_{\la X}}$, 
~there is a constant ~$\gamma$ ~such that
\begin{eqnarray*}
\big\|(\check\la,\check{X})-(\tilde\la,\tilde{X})\big\|_2
&\le&
\frac{1}{1\!-\!\mu}\,\big\|(\la_1,X_1)-(\tilde\la,\tilde{X})\big\|_2 \\
& = & \frac{1}{1\!-\!\mu}\,\big\|
\bdg_{_{\la X}} (\check{A},\tilde\la,\tilde{X})^\dagger 
\bdg(\check{A},\tilde\la,\tilde{X}) - 
\bdg_{_{\la X}} (\tilde{A},\tilde\la,\tilde{X})^\dagger 
\bdg(\tilde{A},\tilde\la,\tilde{X})
\big\|_2 \\
& \le & \frac{1}{1\!-\!\mu}\,
\Big(\big\|\bdg_{_{\la X}}(\check{A},\tilde\la,\tilde{X})^\dagger\big\|_2\,
\big\|\bdg(\check{A},\tilde\la,\tilde{X})-
\bdg(\tilde{A},\tilde\la,\tilde{X})\big\|_2\\
& & ~~~~~~~~~~+
\big\|\bdg_{_{\la X}} (\check{A},\tilde\la,\tilde{X})^\dagger-
\bdg_{_{\la X}}(\tilde{A},\tilde\la,\tilde{X})^\dagger\big\|_2 
\big\|\bdg(\tilde{A},\tilde\la,\tilde{X})\big\|_2\Big)\\
&~\le~&  \gamma\,\big\|\tilde{A}-\check{A}\big\|_2
\end{eqnarray*}
for all ~$\tilde{A},\check{A}\in\Phi$.
~Namely, the ~$m\times k$ ~pseudo-eigenvalue is Lipschitz continuous with 
respect to the matrix.
~By setting ~$(\tilde{A},\tilde\la,\tilde{X}) = (A,\la_*,X_*)$
~in the above inequalities we have (\ref{ferr}) because 
~$\big\|\bdg(A,\la_*,X_*)\big\|_2=0$.
~Thus ~$\mu=0$ ~and (iii) is proved.

For the assertion (iv), ~the ~$\check{X}$ ~is of full rank
since ~$X_*$ ~is and the least squares solution of
~$\bdg(G,\la,X)=\bdo$ ~is continuous, implying 
~$\check{X}^\dagger\,\check{X} = I$ ~and thus
~$E = E\,\check{X}^\dagger\,\check{X}$, ~leading to 
~$(\check{A}-E\,\check{X}^\dagger-\check\la I)\check{X} \,=\,\check{X}\,S$.
~The eigenvalue ~$\check\la$ ~of ~$\check{A}+E\,\check{X}^\dagger$ 
~corresponds to a Jordan block of size at least ~$k$
~since ~$S$ ~in (\ref{S}) is nilpotent of rank ~$k-1$.
~\QED
\end{proof}

The Pseudo-Eigenvalue Theorem establishes a rigorous and thorough
regularization of the ill-posed problem in computing a defective eigenvalue 
so that the problem of computing a pseudo-eigenvalue enjoys unique existence 
and Lipschitz continuity of the solution that approximates the underlying 
defective eigenvalue with an error bound proportional to the data error,
reaffirming the ~$m\times k$ ~condition number
as a bona fide sensitivity measure of an eigenvalue whether it is defective
or not.
~This regularization makes it possible to compute defective eigenvalues
accurately using floating point arithmetic even if the matrix is
perturbed. 

\vspace{-4mm}
\section{An algorithm for computing a defective eigenvalue}

\vspace{-4mm}
Theorem~\ref{t:pet} sets the foundation for accurate computation 
of a defective eigenvalue.
~We assume the given matrix ~$A$ ~is 
the data representation of an underlying matrix possessing a defective 
eigenvalue,
an initial estimate ~$\la_0$ ~is close to that eigenvalue,
and the multiplicity support ~$m\times k$ ~is known, identified or 
estimated (more to that later in \S\ref{s:id}). 
~By Lemma~\ref{l:jbit}, the proper parameter ~$C$ ~is in an open dense 
subset of 
~$\C^{n\times m}$ ~so that we can set ~$C$ ~at random.
~With ~$C$ ~available, we can then set up
\begin{eqnarray}
\bdx_1^{(0)} & ~~=~~& 
\mbox{\footnotesize 
$\left[\begin{array}{c} A-\la_0\,I \\ C^\h 
\end{array}\right]^\dagger\,\left[\begin{array}{c} \bdo \\ T_{1:m,1}
\end{array}\right]$} \label{x10} \\
\bdx_{j+1}^{(0)} &~~=~~&  \al_{j}\,\mbox{\footnotesize 
$\left[\begin{array}{c} A-\la_0\,I \\ C^\h 
\end{array}\right]^\dagger\,\left[\begin{array}{c} \bdx_j^{(0)} \\ \bdo
\end{array}\right]$} 
~~~\mbox{for}~~ j = 1,\ldots,k-1
\label{xj1alj1} \\
S & = & \mbox{\scriptsize 
$\left[\begin{array}{c|ccc} 0 &  ~\al_1 & & \\ \vdots & & \ddots & \\ 
0 & & & \al_{k-1}\\ \hline
0 & ~0 & \cdots & 0 \\ \end{array}\right]$} \label{xj1a1j2}
\end{eqnarray}
where, for ~$j=1,\ldots,k-1$, ~the scalar ~$\al_j$ ~scales ~$\bdx_{j+1}^{(0)}$
~to a unit vector.
~Denote ~$X_0 = \blb \bdx_1^{(0)},\cdots,\bdx_k^{(0)}\brb$.
~Then ~$\bdg(A,\la_0,X_0) \,\approx\,\bdo$ ~and we apply the Gauss-Newton 
iteration (\ref{gnit0})
that converges to ~$(\la_*,\,X_*)$ ~assuming the initial estimate ~$\la_0$
~is sufficiently close to ~$\la_*$.
~When the iteration stops at the ~$j$-th step, a QR decomposition of
the matrix representing ~$\bdg_{_{\la X}}(A,\la_j,X_j)$ ~is available 
and thus an estimate ~$\big\|\bdg_{_{\la X}}(A,\la_j,X_j)^\dagger\big\|_2$
~of the ~$m\times k$ ~condition number can be computed by a couple of steps
of inverse iteration \cite{li-zeng-03} with a negligible cost.
~A pseudo-code of Algorithm {\sc PseudoEig} is given in 
Fig.~\ref{f:jbit}.

\begin{figure}[ht]
\begin{center}
\shabox{\hspace{0mm}\parbox{4.5in}{
\vspace{-1mm}
\index{Algorithm PseudoEig}
{\bf Algorithm} {\sc PseudoEig}
\begin{itemize}
\item[] \hspace{-9mm}{\sc Input}: matrix ~$A$, 
~eigenvalue estimate ~$\la_0$,
~multiplicity support ~$m$, ~$k$
\begin{itemize}
\item set ~$C$ ~as a random ~$n\times m$ ~matrix and ~$\bdx_1^{(0)}$ 
~as in (\ref{x10})
%\end{itemize}
\item set ~$\bdx_{2}^{(0)},\cdots,\bdx_{k}^{(0)}$ ~by (\ref{xj1alj1})
\item set ~$X_0 = \blb \bdx_{1}^{(0)},\cdots,\bdx_{k}^{(0)}\brb$,
~$S$ ~as in (\ref{xj1a1j2}) and ~$\bdg$ ~as in (\ref{jbitf})
%\item[\%] Gauss-Newton iteration
\item for ~$j = 0, 1, \ldots$ ~do 
\begin{itemize}
\item solve 
~$\displaystyle \bdg_{_{\la X}}(A,\la_j,X_j) \,(\sg, Y) ~=~ \bdg(A,\la_j,X_j)$ 
~for the least squares solution ~$(\sg,Y)$
\item set ~$\la_{j+1} = \la_j-\sg$, ~$X_{j+1} = X_j-Y$.
\item if ~$\big\|\bdg(A,\la_j,X_j)\big\|_2 < \big\|\bdg(A,\la_{j+1},X_{j+1})
\big\|_2$  ~then
\begin{itemize}
\item[] set ~$(\hat{\la},\hat{X}) = (\la_j,X_j)$, ~break the loop. ~end if
\end{itemize}
\end{itemize}
\item[] end do
\end{itemize}
\item[] \hspace{-9mm}
{\sc Output}: pseudo-eigenvalue ~$\hat{\la}$, ~backward error bound
~$\big\|\bdg(A,\hat\la,\hat{X})\big\|_2\,\big\|X^\dagger\big\|_2$, 
~$m\times k$ ~condition number 
~$\big\|\bdg_{_{\la X}}(A,\hat\la,\hat{X})^\dagger\big\|_2$
\end{itemize}
}}
\end{center}
\caption{Algorithm {\sc PseudoEig}} \label{f:jbit}
\end{figure}
\normalsize

\vspace{-4mm}
\section{Taking advantage of the Jacobian structure}

\vspace{-4mm}
The matrix ~$\bdg_{_{\la X}}(A,\la_j,X_j)$
is pleasantly structured with a proper 
arrangement so that the cost of its QR decomposition can be reduced 
substantially.  
~Let ~$X=\blb \bdx_1,\cdots,\bdx_k \brb$, ~the image 
~$\bdg(A,\la,X)$ ~can be arranged as 
\[
\mbox{\tiny $\left[\begin{array}{rrrrrl}
C^\h\,\bdx_k  &&&&& - T_{1:m,k}\\
(A -\la I)\,\bdx_k & - s_{k-1,k}\,\bdx_{k-1} & - s_{k-2,k}\,\bdx_{k-2} &
-\cdots & -s_{1k}\,\bdx_1 &   \\
& C^\h\,\bdx_{k-1}  &&&& - T_{1:m,k-1}\\
& (A -\la I)\,\bdx_{k-1} & - s_{k-2,k-1}\,\bdx_{k-2} & 
-\cdots & -s_{1,k-1}\,\bdx_1 &   \\
& & \ddots~~~~~~~~~~~~~~~~~~ & \ddots~~ & \vdots~~~~~~ & ~~~~~~~~\vdots \\
 & & \ddots~~~~~~~~ & &\ddots~~~~~~\vdots~~~~~~ & ~~~~~~~~\vdots \\
 & & \ddots & &-s_{12}\,\bdx_1 & ~~~~~~~~\vdots \\
 & & & \ddots & C^\h\,\bdx_1 &  -T_{1:m,1} \\
 & & & & (A-\la\,I)\,\bdx_1 &   \\
\end{array}\right]$}.
\]
As a result, the partial Jacobian matrix in a blockwise upper-triangular form%
\footnote{Matlab code is available at 
{\tt homepages.neiu.edu/$\sim$zzeng/pseudoeig.html}.
}.
\begin{eqnarray*}
\lefteqn{\mbox{\scriptsize $\pd{\bdg(A,\la,X)}{(\bdx_k,\ldots,\bdx_1,\la)}$}
~~=}\\&& \mbox{\tiny $\left[\begin{array}{cccccc}  
\multicolumn{1}{|c}{
\begin{array}{c} C^\h \\ A-\la I \end{array}} & 
\begin{array}{c} O \\ -s_{k-1,k}\,I \end{array} & 
\begin{array}{c} O \\ -s_{k-2,k}\,I \end{array} & 
\begin{array}{c} \cdots \\ \cdots \end{array} & 
\begin{array}{c} O \\ -s_{1k}\,I \end{array}  &
\begin{array}{c} \bdo \\ -\bdx_k \end{array}  \\ \cline{1-1}
&
\multicolumn{1}{|c}{
\begin{array}{c} C^\h \\ A-\la I \end{array}} & 
\begin{array}{c} O \\ -s_{k-2,k-1}\,I \end{array} & 
\begin{array}{c} \cdots \\ \cdots \end{array} & 
\begin{array}{c} O \\ -s_{1,k-1}\,I \end{array}  &
\begin{array}{c} \bdo \\ -\bdx_{k-1} \end{array}  \\ \cline{2-2}
& ~~~~~\ddots & ~~~~~~~~\ddots & & \vdots & \vdots \\
& & \ddots~~~ & \ddots  & \vdots & \vdots \\
& & & 
\multicolumn{1}{|c}{
\begin{array}{c} C^\h \\ A-\la I \end{array}}  & 
\begin{array}{c} O \\ -s_{12}\,I \end{array} & 
\begin{array}{c} \bdo \\ -\bdx_{2} \end{array}  \\ \cline{4-4}
& & &  & 
\multicolumn{1}{|c}{
\begin{array}{c} C^\h \\ A-\la I \end{array}}  & 
\begin{array}{c} \bdo \\ -\bdx_{1} \end{array} \\ \cline{5-6}
\end{array}\right]$}.
\end{eqnarray*}
We can further assume the matrix ~$A$ ~is already reduced to a Hessenberg form
or even Schur form. 
~Then 
\[ \left[\begin{array}{c} C^\h \\ A-\la I \end{array}\right] ~~=~~
\mbox{\tiny $\left[\begin{array}{cccc} * & * & \cdots & * \\
\vdots & \vdots & \ddots & \vdots \\ * & ~*~ & ~\cdots~ & * \\
  & * & \cdots & * \\ &        & \ddots & \vdots \\
  &        &        &  * \end{array}\right]$}
\]
is nearly upper-triangular with ~$m+1$ ~subdiagonal lines of nonzero entries.
~The QR decomposition of the partial Jacobian 
~$\bdg_{\bdx_k\cdots\bdx_1\,\la}(A,\la,X)$ ~can then be carried out by 
a sequence of standard textbook Householder transformations.

The main cost of Algorithm {\sc PseudoEig} occurs at solving 
the linear equation
\[ \bdg_{_{\la X}}(A,\la_j,X_j)(\sg, Y) ~~=~~ \bdg(A,\la_j,X_j)
\]
for the least squares solution
~$(\sg,Y)$ ~in ~$\C\times\C^{n\times k}$. 
~The structure of the partial Jacobian matrix ~$\bdg_{_{\la X}}(A,\la_j,X_j)$ 
~can be taken advantage of if the QR decomposition is needed.
~It is also suitable to apply an iterative method for large sparse matrices
particularly if ~$A$ ~is sparse.

\vspace{-4mm}
\section{Identifying the multiplicity support}\label{s:id}

\vspace{-4mm}
The geometric multiplicity can be identified with 
numerical rank-revealing.
~Let ~$\la_0$ ~be an initial estimate of ~$\la_*\in\eig(A)$ ~in 
Lemma~\ref{l:jbit} and assume
\[ |\la_0-\la_*| ~~<~~ \theta ~~<~~ 
\min_{\la\in\eig(A)\setminus\{\la_*\}}\,\big|\la-\la_0\big|.
\]
The geometric multiplicity of ~$\la_*$ ~can be computed as the numerical
nullity of ~$A-\la_0 I$ ~within the error tolerance ~$\theta$ ~defined as
\begin{equation}\label{gmest}  m ~~=~~ 
\max\big\{j ~\big|~ \sg_{n-j+1}(A-\la_0 I) < \theta\}
\end{equation}
where ~$\sg_i(\cdot)$ ~is the \,$i$-th largest singular value of ~$(\cdot)$.
~A misidentification of the geometric multiplicity can be detected.
~Underestimating ~$m$ ~results in an undersized ~$C$ ~in (\ref{jbitf}) so that
both 
~{\scriptsize $\left[\begin{array}{c} A-\la_* I \\ C^\h \end{array}\right]$}
~and the partial Jacobian ~$\bdg_{_{\la X}}(A,\la_*,X_*)$ ~are rank-deficient.
~Overestimating ~$m$ ~renders the system {\scriptsize
~$\left[\begin{array}{c} A-\la_* I \\ C^\h \end{array}\right]\,
\bdu \,=\,
 \left[\begin{array}{c} \bdo \\ T_{1:m,1} \end{array}\right]$}
~inconsistent with a large residual norm.
~During an iteration in which ~$(\la_j,X_j)$ ~approaches ~$(\la_*,X_*)$,
a large condition number of the partial Jacobian ~$\bdg_{_{\la X}}(A,\la_j,X_j)$
~indicates a likely underestimated geometric multiplicity and a large
residual ~$\big\|\bdg(A,\la_j,X_j)\big\|_2$ ~suggests a possible 
overestimation.

If the geometric multiplicity is identified, it is possible to find 
the Segre anchor by a searching scheme based on the condition
number of the Jacobian ~$\bdg_{_{\la X}}(A,\la_j,X_j)$ ~as shown in the
following example.

\begin{example} \em
~Let ~$A$ ~be the matrix 
\[\mbox{\tiny $\left[\begin{array}{rrrrrrrrrrrrrrrrrrrr}
0 \!& 4 \!& 0 \!&-4 \!& 0 \!&-2 \!& 1 \!& 0 \!& 0 \!&-1 \!&-1 \!&-1 \!&-1 \!& 2 \!& 1 \!& 0 \!& 0 \!&-1 \!& 0 \!& 0\\
0 \!& 3 \!& 3 \!&-4 \!& 1 \!& 0 \!& 4 \!& 1 \!&-1 \!&-1 \!& 0 \!& 0 \!& 0 \!& 0 \!& 0 \!&-2 \!& 0 \!& 0 \!&-1 \!&-1\\
1 \!&-4 \!& 2 \!&10 \!&-3 \!& 1 \!&-7 \!&-2 \!&-1 \!& 3 \!& 2 \!& 0 \!& 0 \!&-1 \!& 0 \!& 3 \!&-1 \!& 1 \!& 1 \!& 2\\
-1 \!&-1 \!& 2 \!& 5 \!&-2 \!&-1 \!&-5 \!&-1 \!&-1 \!& 2 \!& 0 \!&-1 \!& 0 \!& 1 \!& 0 \!& 2 \!&-1 \!&-1 \!&-1 \!& 1\\
-1 \!&-2 \!& 2 \!& 1 \!& 1 \!& 1 \!&-1 \!& 0 \!&-2 \!& 1 \!& 0 \!& 0 \!& 0 \!& 1 \!& 0 \!& 0 \!&-1 \!&-1 \!& 0 \!& 0\\
1 \!& 4 \!& 1 \!&-12 \!& 4 \!& 2 \!&13 \!& 3 \!& 0 \!&-4 \!& 0 \!& 0 \!&-2 \!&-1 \!& 1 \!&-6 \!& 1 \!& 1 \!& 0 \!&-3\\
-1 \!&-1 \!& 1 \!& 5 \!&-2 \!& 0 \!&-4 \!&-2 \!& 0 \!& 1 \!&-1 \!& 0 \!& 0 \!& 1 \!& 0 \!& 4 \!& 0 \!&-1 \!&-1 \!& 2\\
1 \!& 2 \!&-4 \!& 0 \!& 1 \!& 0 \!& 1 \!& 1 \!& 4 \!&-2 \!&-1 \!& 1 \!& 0 \!&-1 \!& 0 \!& 1 \!& 2 \!& 1 \!& 0 \!& 1\\
0 \!&-5 \!& 2 \!&10 \!&-5 \!&-1 \!& -10 \!&-2 \!&-1 \!& 6 \!& 3 \!&-2 \!& 0 \!& 0 \!& 0 \!& 3 \!&-3 \!& 0 \!& 1 \!& 2\\
 1 \!& 1 \!& 1 \!&-1 \!& 2 \!& 2 \!& 4 \!& 0 \!& 1 \!&-1 \!&-1 \!& 2 \!& 0 \!&-1 \!& 0 \!& 0 \!& 2 \!& 1 \!&-1 \!& 0\\
 1 \!&-1 \!& 0 \!& 2 \!& 1 \!& 2 \!& 1 \!& 0 \!& 1 \!&-1 \!& 3 \!& 2 \!& 0 \!&-1 \!& 0 \!& 0 \!& 1 \!& 1 \!&-1 \!& 0\\
-1 \!&-3 \!& 0 \!& 5 \!&-1 \!& 2 \!&-4 \!&-1 \!& 0 \!& 1 \!&-1 \!& 4 \!& 4 \!& 1 \!&-2 \!& 2 \!& 0 \!&-1 \!& 0 \!& 1\\
-2 \!& 0 \!& 0 \!&-1 \!& 0 \!& 0 \!& 0 \!& 0 \!& 0 \!& 0 \!&-1 \!& 0 \!& 3 \!& 2 \!& 0 \!& 0 \!& 0 \!&-1 \!& 0 \!& 0\\
-3 \!& 4 \!&-1 \!&-4 \!& 0 \!&-2 \!& 1 \!& 0 \!& 0 \!&-1 \!&-1 \!&-1 \!&-2 \!& 5 \!& 2 \!& 0 \!& 0 \!&-1 \!& 1 \!& 0\\
-2 \!& 0 \!& 0 \!&-1 \!& 0 \!& 0 \!& 0 \!& 0 \!& 0 \!& 0 \!&-1 \!& 0 \!& 0 \!& 2 \!& 3 \!& 0 \!& 0 \!&-1 \!& 0 \!& 0\\
 0 \!& 0 \!&-2 \!& 3 \!& 1 \!& 2 \!&-1 \!&-1 \!& 2 \!&-2 \!&-2 \!& 2 \!& 0 \!& 0 \!& 0 \!& 5 \!& 2 \!& 0 \!& 0 \!& 1\\
 6 \!& 3 \!&-6 \!& 3 \!& 6 \!& 4 \!& 7 \!& 0 \!& 7 \!&-7 \!& 1 \!& 5 \!&-2 \!&-6 \!& 1 \!& 0 \!& 8 \!& 6 \!&-1 \!& 0\\
 0 \!& 2 \!&-4 \!&-4 \!& 1 \!&-1 \!& 4 \!& 1 \!& 0 \!&-1 \!& 0 \!&-1 \!&-1 \!& 0 \!& 1 \!&-2 \!& 0 \!& 3 \!& 4 \!&-1\\
 1 \!&-4 \!&-1 \!&11 \!&-4 \!& 1 \!&-8 \!&-3 \!&-1 \!& 3 \!& 2 \!& 0 \!& 0 \!&-1 \!& 0 \!& 4 \!&-1 \!& 1 \!& 4 \!& 3\\
 0 \!& 0 \!&-1 \!& 1 \!&-2 \!& 0 \!&-1 \!&-2 \!& 1 \!& 0 \!& 0 \!& 0 \!& 0 \!& 0 \!& 0 \!& 1 \!& 0 \!& 0 \!& 0 \!& 4
\end{array}\right]$}
\]
where ~$\eig(A) = \{2,3\}$ ~of nonzeero Segre characteristics 
~$\{4,3,3\}$ ~and ~$\{5,5\}$ ~respectively.
~Applying the Francis QR algorithm implemented in Matlab yields computed 
eigenvalues scattered around ~$\la_1=2.0$ ~and ~$\la_2=3.0$:

\tiny
\[ \begin{array}{rcr}
2.000118556521482 + 0.000118397929590i
&~~~~~~~& 3.000398490901253 + 0.001224915665189i \\
2.000118556521482 - 0.000118397929590i
&& 
3.000398490901253 - 0.001224915665189i\\ 
1.999881443477439 + 0.000118714860725i
&& 
3.000646066870935 + 0.000469627646058i\\ 
1.999881443477439 - 0.000118714860725i
&& 
3.000646066870935 - 0.000469627646058i\\ 
2.000013778528383 + 0.000018105742295i
&& 
3.001287762162967 + 0.000000000000000i\\ 
2.000013778528383 - 0.000018105742295i
&& 
2.999753002133234 + 0.000759191914332i\\ 
2.000008786021464 + 0.000020979720849i
&& 
2.999753002133234 - 0.000759191914332i\\ 
2.000008786021464 - 0.000020979720849i
&& 
2.998957628017279 + 0.000757681758834i\\ 
1.999977435451235 + 0.000002873978888i
&& 
2.998957628017279 - 0.000757681758834i\\ 
1.999977435451235 - 0.000002873978888i
&& 
2.999201861991639 + 0.000000000000000i 
\end{array}
\] \normalsize
Using two computed eigenvalues above, say 
\[\tilde\la_0 ~~=~~ 
\mbox{\tiny 1.999881443477439 - 0.000118714860725i} ~~~~\mbox{and}~~~
\hat\la_0 ~~=~~ \mbox{\tiny 3.001287762162967 + 0.000000000000000i}
\]
as initial estimates of the defective eigenvalues, smallest singular values
~of ~$A-\tilde\la_0 I$ ~and ~$A-\hat\la_0 I$ ~can be computed using a
rank-revealing method as
\tiny
\[ \begin{array}{rcr}
\sg_j(A-\tilde\la_0 I):~~~~~~~~~~ & & 
\sg_j(A-\hat\la_0 I):~~~~~~~~~~ \\
\cdots & & \cdots \\
   0.084065699924186 &~~~~~~~~~~~~~&    0.070046163725993\\
   0.049368630759014 &~~~~~&    0.054661269198836\\
   \mathbf{0.000000000003635} &~~~~~&    0.036234932328447\\
   \mathbf{0.000000000000280} &~~~~~&    \mathbf{0.000000000000001}\\
   \mathbf{0.000000000000001} &~~~~~&    \mathbf{0.000000000000000}\\
\end{array}
\]\normalsize
indicating the geometric multiplicities ~$3$ ~and ~$2$ ~respectively.

Set the geometric multiplicities for the initial eigenvalue estimate
~$\tilde\la_0$ ~and ~$\hat\la_0$ ~as ~$3$ ~and ~$2$ ~respectively.
~Applying Algorithm~{\sc PseudoEig} with increasing input 
~$k=1,2,\ldots,$ ~as estimated Segre anchors, 
~we list the computed eigenvalues, ~$m\times k$
~condition numbers and residual norms in Table~\ref{t:sca}.
~At ~$\la_1$, ~for instance, underestimated values ~$k=1,2$
~render the ~$m\times k$ ~condition numbers as large as ~$10^8$ ~and 
the residuals to be tiny, while the overestimated value ~$k=4$ ~leads to
a drastic increase of residual from ~$10^{-16}$ ~to ~$10^{-3}$ ~but maintains
the moderate ~$m\times k$ ~condition number, as shown in Table~\ref{t:sca}.
~Similar effect of increasing estimated values of the Segre
anchor at ~$\la_2$ ~can be observed consistently. \QED
\end{example} 

\begin{table}[htb]
\begin{center} \small
\begin{tabular}{||r||c|r|l||} \hline\hline
test & \multicolumn{3}{c||}{at ~$\la_1 = 2$,
~Segre anchor ~$k=3$} \\ \cline{2-4}
$k$ ~value  & computed eigenvalue & condition number & residual norm \\ \hline
$k=1$ & \tiny\tt  1.999881443477439 - 0.000118714860725i%
& \tiny\tt 560995239.6 & \tiny\tt 0.000000000000001 \\
$k=2$ & \tiny\tt   1.999999993438010 - 0.000000011324234i%
& \tiny\tt 147603979.2 & \tiny\tt 0.000000000000001 \\
$\rightarrow~k=3$ & \tiny\tt\tt  2.000000000000000 - 0.000000000000000i%
& \tiny\tt  58.7 & {\tiny\tt  0.0000000000000006} $\leftarrow$ \\
$k=4$ & \tiny\tt  2.109885640097783 - 0.004348977611146i%
& \tiny\tt 24.1 & \tiny\tt 0.007 \\ \hline \hline
%%%
test & \multicolumn{3}{c||}{at ~$\la_2 = 3$, 
~Segre anchor ~$k=5$}\\ \cline{2-4}
$k$ ~value & computed eigenvalue & condition number & residual norm \\ \hline
$k=1$ & \tiny\tt  3.001287762162967 & \tiny\tt 2161090332264.6  & 
\tiny\tt 0.000000000000003 \\ 
$k=2$ & \tiny\tt 3.001287762162967 & \tiny\tt 7962600062.8  & 
\tiny\tt 0.0000000000005 \\ 
$k=3$ & \tiny\tt 3.001287762162967 & \tiny\tt 4556940.4  & 
\tiny\tt 0.000000003 \\ 
$k=4$ & \tiny\tt 3.000000013572103 & \tiny\tt 687859583.9  & 
{\tiny\tt 0.0000000000000007} \\ 
$\rightarrow~k=5$ & \tiny\tt 3.000000000000000 & \tiny\tt 33.9  & 
{\tiny\tt 0.0000000000000007} ~$\leftarrow$\\ 
$k=6$ & \tiny\tt 3.002451613695432 & \tiny\tt 34.1  & 
\tiny\tt 0.007 \\ \hline\hline
\end{tabular}
\end{center}
\caption{\small Effect of increasing estimated Segre anchors:
~Underestimated values yield large condition numbers of the Jacobian and
overestimated values lead to large residual norms.
~The results using the correct anchors are pointed out with arrows.}
\label{t:sca} 
\end{table}

Identify multiplicity support in practical computation 
can be challenging.
~It is certainly a subject that is worth further studies.

\vspace{-4mm}
\section{Improving accuracy with orthonormalization}\label{s:orth}

\vspace{-4mm}
Algorithm {\sc PseudoEig} uses a simple nilpotent matrix ~$S$ ~with only
one superdiagonal line of nonzero entries.
~By Lemma~\ref{l:jbit} (iii), we can modify ~$C$ ~and ~$S$ ~as parameters
of ~$\bdg$ ~so that the matrix component ~$\check{X}$ ~of the solution to
~$\bdg(A,\la_*,\check{X})=\bdo$ ~has orthonormal columns.
~The orthonormalization can be carried out 
by the following process:

\begin{itemize}\parskip0mm
\item[$-$] 
Execute Algorithm {\sc PseudoEig} and obtain output 
~$\hat\la, \hat{X}, C, S$.
\item[$-$] Normalize ~$\hat{X}_{1:n,1}$ ~and adjust ~$s_{12}$ ~so
that ~$(A-\hat\la I)\,\hat{X} \approx \hat{X}\,S$ ~still holds.
\item[$-$] Reset ~$C_{1:n,1}$ ~as ~$\hat{X}_{1:n,1}$.
\item[$-$] Reset
~$\hat{X}_{1:n,2:k}$ ~as ~$\hat{X}_{1:n,2:k} - \hat{X}_{1:n,1}\, 
(\hat{X}_{1:n,1})^\h \hat{X}_{1:n,2:k}$.
\item[$-$] Reset
~$S_{1,1:k}$ ~as 
~$S_{1,1:k}+(\hat{X}_{1:n,1})^\h \hat{X}_{1:n,2:k}\,S_{2:k,1:k}$.
\item[$-$] Obtain the thin QR decomposition ~$\hat{X}=Q\,R$.
\item[$-$] Reset ~$S$ ~as ~$R\,S\,R^{-1}$ ~in the mapping ~$\bdg$.
\item[$-$] Set the initial iterate ~$(\la_0,X_0) = (\hat\la,Q)$ ~for
the Gauss-Newton iteration (\ref{gnit0}).
\end{itemize}

The advantage of such an orthonormalization is intuitively clear.
~When we solve for the least squares solution ~$(\tilde{\la},\tilde{X})$ 
~of the equation ~$\bdg(A,\la,X)=\bdo$ ~minimizing the magnitude of the residual 
~$(A-\tilde\la\,I)\tilde{X} - \tilde{X}\,S = E$, ~the backward error 
given in Theorem~\ref{t:pet} (iv) is 
~$\|E\|_2\,\big\|\tilde{X}^\dagger\big\|_2$. 
~When the norm ~$\|\tilde{X}^\dagger\|_2$ ~is large, ~minimizing the residual
norm ~$\|E\|_2$ ~may not achieve the highest attainable backward accuracy.
~If the columns of ~$\tilde{X}$ ~are orthonormal, however, the norm 
~$\|\tilde{X}^\dagger\|_2=1$ ~and the least squares solution that minimizing
the residual norm ~$\|E\|_2$ ~directly minimizes the backward error bound.

\begin{example}\em
~Consider the matrix
\begin{equation}\label{jbiteA}  A ~~=~~ \mbox{\tiny $\left[\begin{array}{rrrrr}
2 & 1 & & & \\ & -8 & 1 & & \\ & & 2 & 1 & \\ & & & 2 & 1 \\ 
& -10000 & 1000 & -100 & 12
\end{array}\right]$}
\end{equation}
with an exact eigenvalue ~$\la_*=2$ ~and the multiplicity support ~$1\times 5$.
~A straightforward application of Algorithm~{\sc PseudoEig} in Matlab
yields
\begin{eqnarray*}
\tilde\la &~~=~~ & \mbox{\scriptsize 1.999999999999748} \\
S & = &  \mbox{\tiny $\left[\begin{array}{ccccc}
  0 &  0.100686223197184 &  0  &   0  &       0 \\
  0 &  0 &  0.680272615629152  &   0  &       0 \\
  0 &  0 & 0 & 0.786924421181882  &     0 \\
  0 &  0 & 0 & 0 & 0.922632632948520 \\
  0 &  0 & 0 & 0 &                 0
\end{array}\right]$} \\
\tilde{X}  & = & 
\mbox{\tiny $\left[\begin{array}{rrrrr}
   1.00502786434024 & 0.10210319200724 & .07627239342106 & 
.06542640851275 & .06584192219606 \\
  -0.00000000000025 & 0.10119245986833 & .06945800549083 & 
.06002060904501 & .06036453955047 \\
  -0.00000000000253 & 1.01192459868319 & .76341851426484 & 
.65486429121738 & .65902236805904 \\
  -0.00000000000000 & -0.00000000000051 & .68838459356550 & 
.60075267245722 & .60419916522969 \\
  -0.00000000000000 & -0.00000000000000 & -.00000000000052 & 
.54170664784191 & .55427401993992
\end{array}\right]$}
\end{eqnarray*}
The residual norm
\[  \big\|(A-\tilde\la I)\,\tilde{X}-\tilde{X}\,S\big\|_F ~~\approx~~ 
4.5\times 10^{-14}
\]
can not be minimized further with the unit round-off about ~$10^{-16}$
~considering ~$\|A\|_2 \approx 10^4$.
~The backward error
\[  \big\|(A-\tilde\la I)\,\tilde{X}-\tilde{X}\,S
\big\|_F\,\big\|\tilde{X}^\dagger\big\|_2  ~~\approx~~ 1.3\times 10^{-9}
\]
is not small enough.
~After orthonormalization and resetting the resulting parameter ~$C$ ~and ~$S$
~in ~$\bdg$ ~in (\ref{jbitf}), we apply the Gauss-Newton iteration again
and obtain
\begin{eqnarray*}
\hat\la & ~~=~~ & \mbox{\scriptsize $2.000000000000000$} \\
S & = & \mbox{\tiny $\left[\begin{array}{rrrrr}
0 & 0.09950371902 & -0.00990049999 & 0.00099000050 & -0.99498744208 \\
0 & 0 & 1.00493781395 & -0.00098508732 & 0.99004950866 \\
0 & 0 & 0 & 1.00004900870 & -0.09850873917 \\
0 & 0 & 0 & 0 & 10050.38307728113 \\
0 & 0 & 0 & 0   &          0
\end{array}\right]$} \\
\hat{X} & = &
\mbox{\tiny $\left[\begin{array}{rrrrr}
  -1.0 & -0.000000000002531 &  -0.000000000000000 & -0.000000000000000 
& -0.000000000000000 \\
   0.0 &  -0.099503719021067 & 0.009900499987341 & -0.000990000499944 
& 0.994987442082474 \\
   0.0 & -0.995037190210673 & -0.000990049999192 & 0.000099000049994 
& -0.099498744209908 \\
   0.0 & -0.000000000000000 & -0.999950498725976 & -0.000009900005712 
& 0.009949874421338 \\
   0.0 & -0.000000000000000 &-0.000000000000000 &-0.999999505000536 
&-0.000994987010945
\end{array}\right]$}
\end{eqnarray*}
The residual practically stays about the same magnitude
\[  \big\|(A-\hat\la I)\,\hat{X}-\hat{X}\,S\big\|_F ~~\approx~~ 
1.25\times 10^{-14}
\]
but the backward error improves substantially to 
\[  \big\|(A-\hat\la I)\,\hat{X}-\hat{X}\,S\|_F\,
\big\|\hat{X}^\dagger\big\|_2  ~~\approx~~ 1.25\times 10^{-14}
\]
as ~$\|\hat{X}^\dagger\|_2 \approx 1$.
~More importantly, the forward accuracy of the computed eigenvalue improves 
by 3 additional accurate digits. \QED
\end{example}

When the given matrix represents perturbed data, the orthonormalization 
seems to be more significant in improving the accuracy, as shown in the example
below.

\begin{example}\em
~Using a random perturbation of magnitude about ~$10^{-5}$, ~let 
\begin{equation}\label{AA105}
 \tilde{A} ~~=~~ A + 10^{-5}\,\mbox{\tiny $\left[\begin{array}{rrrrr}
  -0.092 & -0.653  &  -0.201 &  -0.416 &  -0.787 \\
  -0.135 &  -0.218 &   0.054 &  -0.136 &  -0.255 \\
   0.651 &   0.663 &  -0.166 &  -0.969 &  -0.603 \\
  -0.833 &   0.607 &   0.314 &   0.969 &  -0.020 \\
  -0.733 &  -0.879 &   0.256 &  -0.665 &  -0.321 
\end{array}\right]$}
\end{equation}
be the data representation of the matrix ~$A$ ~in (\ref{jbiteA}). 
~Table~\ref{t:baor} lists the computed eigenvalues, residual norms, backward 
errors and forward errors before and after orthonormalization.
~The results show a substantial improvement on the both forward
and backward errors even though the residual magnitudes roughly stay the same.
\QED

\begin{table}[ht]
\begin{center}
\begin{tabular}{|c||c|c|} \hline
& ~before orthonormalization~ & ~after orthonormalization~ \\ \hline
computed eigenvalue &  {\bf 2.00}4413315474177 & {\bf 2.000000}343999377  \\
residual norm &  $2.3\times 10^{-6}$  &  $2.9\times 10^{-6}$  \\
backward error & $ 6.7\times 10^{-2} $ &  $2.9\times 10^{-6}$  \\
forward error & $4.4\times 10^{-3}$ & $3.4\times 10^{-7}$
\\  \hline
\end{tabular}
\end{center}
\caption{Comparison between computing results with or without orthonormalization
of the ~$X$ ~component of the least squares solution to 
~$\bdg(\tilde{A},\la,X)=\bdo$ ~for the matrix ~$\tilde{A}$ ~in 
(\ref{AA105}) at the eigenvalue ~$\la=2$.
~Correct digits of computed eigenvalues are highlighted in boldface.}
\label{t:baor}
\end{table}
\end{example}

\vspace{-4mm}
\section{What kind of eigenvalues are ill-conditioned, and in what sense?}
\label{s:w}

\vspace{-4mm}
The well documented claim that a defective eigenvalue is infinitely sensitive 
to perturbations requires an oft-missing clarification: ~Its unbounded 
sensitivity is with respect to {\em arbitrary} perturbations.
~The sensitivity of a defective eigenvalue is finitely bounded by 
the spectral projector norm divided by the multiplicity if the perturbation
is constrained to maintain the multiplicity, or by the multiplicity support
condition number if the multiplicity support remains unchanged.

Furthermore, the above sensitivity assertions and clarifications are 
applicable on the problem of finding eigenvalues in its strictly narrow sense.
~In the sense of computing a multiple eigenvalue via a cluster mean
provided that the cluster can be grouped correctly, the sensitivity is still
bounded by spectral projector norm divided by the multiplicity.
~The problem of finding a defective eigenvalue in the sense of computing a 
pseudo-eigenvalue elaborated in this paper also enjoys a finitely bounded 
sensitivity in terms of the multiplicity support condition number.

Of course, the problem can still be ill-conditioned even if the sensitivity 
is finitely bounded.
~In the following example, the matrix ~$A$ ~has an eigenvalue of multiplicity
7 and the spectral projector norm is large, so the eigenvalue is 
ill-conditioned in this sense.
~On the other hand, the same eigenvalue is well-conditioned in 
multiplicity support sensitivity. 
~Interestingly, this is not a contradiction at all.
~The conflicting sensitivity measures imply that the cluster mean is not
accurate for approximating the eigenvalue but the pseudo-eigenvalue is, and 
Algorithm {\sc PsedoEig} converges to the defective eigenvalue with all 
the digits correct.

\begin{example}\label{e:4}\em
~A simple eigenvalue 
~$\la_1 = 2.001$ ~and a defective eigenvalue ~$\la_2=2$ ~with the
Segre characteristic ~$\{5,2,0,\ldots \}$, i.e. multiplicity support 
~$2\times 2$, ~exist for
\[  A~~=~~ \mbox{\tiny $\left[\begin{array}{cccccccc}
3.006 &   2  &     1.005 &  -1.001 &  -0.002 &   -0.001 &   -0.001 &  -1\\
 5     &   2  &     5     &  -1     &  -2     &   -1     &   -1     &   0\\
-5.006 &  -3  &    -3.005 &   2.001 &   3.002 &    2.001 &    0.001 &   2\\
-6     &  -1  &    -6     &   3     &   5     &    3     &    0     &   1\\
-5     &  -1  &    -5     &   1     &   6     &    3     &    0     &   1\\
 1     &   0  &     1     &   0     &  -1     &    1     &    0     &   0\\
-4     &  -2  &    -4     &   1     &   3     &    2     &    2     &   2\\
 5     &   0  &     5     &  -1     &  -2     &   -1     &   -1     &   2\\
\end{array}\right].$}
\]
Let ~$P_2$ ~be the spectral projector associated with ~$\la_2=2$.
~The defective eigenvalue ~$\la_2$ ~is {\em both} highly ill-conditioned 
in spectral projector norm and almost perfectly conditioned measured by its 
~$2\times 2$ ~condition number with a sharp contrast:
\[ \mbox{$\frac{1}{m}$}\|P_2\|_2 \,\approx\,4.05\times 10^{14} ~~~~\mbox{while}~~~~
\tau_{A,2\times 2}(\la_2) \le 19.95.
\]
This may seem to be a contradiction except it is not. 
~Both conditions accurately measure the sensitivities of same end (finding
the defective eigenvalue) through different means (cluster mean versas 
pseudo-eigenvalue).
~The Francis QR algorithm implemented in Matlab produces computed eigenvalues

\tiny
\begin{verbatim}
            2.003667055821394,                       2.001912473859015 + 0.002992156370408i,
            1.996674198110247,                       2.001912473859015 - 0.002992156370408i,
            2.000000046670435,                       1.998416899175164 + 0.002994143122392i,
            1.999999953329568,                       1.998416899175164 - 0.002994143122392i.
\end{verbatim}\normalsize 

There is no apparent way to group 7 computed eigenvalues to use the cluster 
mean for the defective eigenvalue even if we know the multiplicity is 7.
~Out of all 8 possible groups of 7 eigenvalues, the best approximation
to ~$\la_2=2.0$ ~by the average is ~$2.000142850475652$ ~with a substantial 
error ~$1.4\times 10^{-4}$ ~predicted by the spectral projector norm.
~In contrast, Algorithm {\sc PseudoEig} accurately converges to
~$\la_2=2.0$ ~with an error below the unit round off ~$2.2\times 10^{-16}$ 
~using the correct multiplicity support ~$2\times 2$ ~that can easily be
identified using the method in \S\ref{s:id}, as accurately 
predicted by the ~$2\times 2$ ~condition number. 

This seemingly contradicting sensitivities can be explained by the fact that
there are infinitely many matrices nearby possessing a single eigenvalue of 
nonzero Segre characteristic ~$\{6,2\}$ ~within 2-norm distances of
~$5.2\times 10^{-5}$. 
~Namely, such a small perturbation increases the multiplicity from 7 to 8 but
can not increase the multiplicity support ~$2\times 2$.
~Using the publicly available Matlab functionality 
{\sc NumericalJordanForm} on the matrix ~$A$ ~with error tolerance ~$10^{-5}$
~in the software package {\sc NAClab}%
\footnote{http://homepages.neiu.edu/$\sim$naclab}
for numerical algebraic computation, we obtain approximately nearest matrix ~$B$ 
~with a single eigenvalue associated with Jordan blocks sizes 6 and 2 
with first 14 digits of its entries given as

\tiny\begin{verbatim}

   3.0059955942896  1.9999978851470  1.0049959180573 -1.0010020728471 -0.0020046893569 -0.0010002300301 -0.0010132897111 -0.9999977586058
   4.9999998736434  1.9999937661529  5.0000001193777 -1.0000000065301 -2.0000000129428 -0.9999999934070 -0.9999999926252 -0.0000169379637
  -5.0060008381014 -3.0000021146845 -3.0050076499797  2.0009979267360  3.0019953102172  2.0009997699688  0.0009867094117  2.0000022421372
  -5.9999927405774 -1.0000021677309 -6.0000074892946  2.9999962015789  4.9999997701478  2.9999999999775 -0.0000002324249  0.9999978331627
  -4.9999995930006 -0.9999961877596 -5.0000095377366  0.9999880178349  5.9999870716625  3.0000002295144 -0.0000150335883  0.9999994536709
   0.9999971940837 -0.0000010574036  1.0000006545259 -0.0000047736356 -1.0000023807994  0.9999966612522 -0.0000036987224 -0.0000054161918
  -4.0000092166543 -1.9999997569827 -3.9999765043908  1.0000142841290  3.0000142782479  2.0000000005386  2.0000249853630  2.0000002431865
   4.9999998983338  0.0000026655939  5.0000001062663 -0.9999999958672 -1.9999999894366 -1.0000000065916 -1.0000000120790  2.0000133696815

\end{verbatim}\normalsize

The spectrum of ~$B$ ~consists of a single eigenvalue ~$\la=2.00125$. 
~This lurking nearby matrix indicates that the multiplicity 7 of 
~$\la_2=2.0\in\eig(A)$ ~can be increased to 8 ~with a small perturbation
~$\|A-B\|_2$, ~which is exactly the kind of cases where spectral projectors
have large norms as elaborated by Kahan \cite{Kahan72}
and grouping method fails. 
~However, those nearby defective matrices have the same multiplicity support
~$2\times 2$, ~implying a small perturbation does not increase either
the geometric multiplicity or the Segre anchor.
~As a result, the multiplicity support condition number is benign, and 
computing the defective eigenvalue via pseudo-eigenvalue is stable.

Interestingly, even though the matrix ~$B$ ~is only known via the above 
empirical data, the spectral projector associated with its eigenvalue 
~$2.00125$ ~is known to be  identity since there is only one distinct eigenvalue. 
~Consequently, the mean of all approximate eigenvalues computed by Francis 
QR algorithm is  2.000124999999987 ~with 14 digits accuracy, same as the
empirical data.
~Algorithm {\sc PseudoEig} produces the ~$2\times 2$ ~pseudo-eigenvalue 
2.000125000000078 with the same number of correct digits due to a small 
~$2\times 2$ ~condition number 14.47.
~The software {\sc NumericalJordanForm} accurately produces the Jordan Canonical
Forms of both matrices ~$A$ ~and ~$B$.
~\QED

\end{example}

\bibliographystyle{siamplain}

\end{document}